\newcommand{\defeq}{\vcentcolon=}
\newtheorem{theorem}{Theorem}[section] % Theorems will be numbered within sections
\newtheorem{proposition}[theorem]{Proposition}
\newtheorem{definition}[theorem]{Definition}
\title{Stochastic Multigrid Method for Blind Ptychographic Phase Retrieval}
\author{ \href{https://orcid.org/0009-0001-9020-9152}{\includegraphics[scale=0.06]{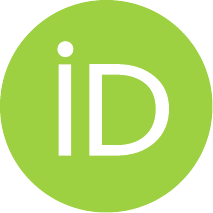}\hspace{1mm}Borong Zhang}\thanks{Corresponding author.} \\
	Department of Mathematics\\
	University of Wisconsin, Madison\\
	Madison, WI 53706 \\
	\texttt{bzhang388@wisc.edu} \\
	\And
	\href{https://orcid.org/0000-0003-4472-3385}{\includegraphics[scale=0.06]{orcid.pdf}\hspace{1mm}Junjing Deng} \\
	Advanced Photon Science\\
	Argonne National Laboratory\\
	Lemont, IL 60439\\
    \And
	\href{https://orcid.org/0000-0002-2807-1324}{\includegraphics[scale=0.06]{orcid.pdf}\hspace{1mm} Yi Jiang} \\
	Advanced Photon Science\\
	Argonne National Laboratory\\
	Lemont, IL 60439\\
    \And
	\href{https://orcid.org/0000-0002-4131-9363}{\includegraphics[scale=0.06]{orcid.pdf}\hspace{1mm}Zichao Wendy Di} \\
	Mathematics and Computer Science \& Advanced Photon Science\\
	Argonne National Laboratory\\
	Lemont, IL 60439\\
}
\begin{document}
\maketitle

\begin{abstract}
We present eMAGPIE (extended Multilevel–Adaptive–Guided Ptychographic Iterative Engine), a stochastic multigrid method for blind ptychographic phase retrieval that jointly recovers the object and the probe. We recast the task as the iterative minimization of a quadratic surrogate that majorizes the exit-wave misfit. From this surrogate, we derive closed-form updates, combined in a geometric-mean, phase-aligned joint step, yielding a simultaneous update of the object and probe with guaranteed descent of the sampled surrogate. This formulation naturally admits a multigrid acceleration that speeds up convergence. In experiments, eMAGPIE attains lower data misfit and phase error at comparable compute budgets and produces smoother, artifact-reduced phase reconstructions.
\end{abstract}

\keywords{phase retrieval \and multigrid optimization \and inverse problems \and ptychography \and coherent diffraction imaging}

\section{Introduction}
Ptychography is a coherent diffractive imaging technique that reconstructs high-resolution, complex-valued images from a series of diffraction patterns. It has had a broad impact in materials science~\cite{materials_science_2}, biological imaging~\cite{biology_1,biology_2}, and integrated-circuit metrology~\cite{integrated_circuit,integrated_circuit_1} (see~\cite{Rodenburg2019} for a survey). In a typical experiment, a coherent probe is scanned over overlapping regions of an object, and only the far-field intensities are measured, making the reconstruction a \emph{phase-retrieval} problem.

Mathematically, ptychographic phase retrieval leads to a nonconvex and ill-posed inverse problem. A widely used class of solvers is the Ptychographical Iterative Engine (PIE) family~\cite{PIE,ePIE,rPIE}, which casts reconstruction as an iterative sequence of data-consistent updates stabilized by quadratic regularization. In many practical settings, the probe is unknown, resulting in the \emph{blind} problem in which both the object and the probe must be estimated jointly.

Ptychography naturally exhibits a multiscale character. Multigrid methods exploit this property by solving the problem across a hierarchy of discretizations~\cite{multigrid_book}. In optimization, the MG/OPT framework extends these ideas to nonlinear objectives and constraints, often yielding substantial computational speedups~\cite{MGOPT}. Variants of MG/OPT have been developed and analyzed for a wide range of applications~\cite{Nash01012000,mgopt_1,mgopt_2}. In inverse problems, multilevel hierarchies can further help mitigate ill-posedness by enforcing consistency across scales~\cite{mginverse1,mginverse2,mginverse3}.

In ptychography, coarsening the object and its associated exit-wave representations enables inexpensive global corrections before refining fine-scale details. Recent adaptations have demonstrated promising improvements~\cite{MGOPT_ptycho}. In our prior work on the \emph{non-blind} setting~\cite{magpie}, we introduced MAGPIE (Multilevel--Adaptive--Guided Ptychographic Iterative Engine), a stochastic solver that (i) minimizes a {majorizing quadratic surrogate} of the exit-wave misfit and (ii) accelerates object updates through multigrid corrections. We also established a theoretical framework interpreting non-blind rPIE as a form of surrogate minimization~\cite{magpie}.

In this work, we extend the surrogate-minimization framework to \emph{blind} ptychography and introduce a joint object–probe update that minimizes the majorizing surrogate through simple per-pixel operations. A multigrid enhancement further accelerates convergence and improves stability. Our key contributions are as follows:

\begin{enumerate}
\item \textbf{Blind surrogate minimization.} We generalize the majorizing exit-wave surrogate of~\cite{magpie} to the blind setting, proving objective- and gradient-matching at the current iterate and establishing a global upper-bound (majorization) property that guarantees monotonic descent.
\item \textbf{Geometric-mean, phase-aligned joint update.} We derive a compact combination rule that fuses the individual minimizers for the object patch and probe via a geometric mean with phase alignment. 
\item \textbf{Multilevel acceleration.} Consistent with~\cite{rPIE}, we find that, in typical optical regimes, reconstruction quality is far more sensitive to the {object} update than to the {probe} update; accordingly, we allocate more algorithmic effort to a stronger object step while using a standard, lightly tuned probe step, and we couple them via the proposed joint rule. For the object-patch subproblem, we employ MAGPIE, introduced in~\cite{magpie}. This yields faster, more stable convergence, particularly at low overlap ratios.
\item \textbf{Empirical validation.} Tests on simulated and experimental datasets show consistently lower reconstruction error, improved probe fidelity, and enhanced robustness to reduced overlap and elevated noise compared with current practice.
\end{enumerate}

\paragraph{Organization.}
Section~\ref{sec:prelim} formulates blind ptychography and notation and reviews PIE-family algorithms. Section~\ref{sec:surrogate} presents the blind majorizing surrogate, the joint update, and theoretical properties, and details the multilevel solver used for the object-patch subproblem and its interlevel operators/weights. 
Section~\ref{sec:experiments} reports numerical results.

\section{Preliminaries}\label{sec:prelim}

\subsection{2D blind ptychographic phase retrieval} 
In two-dimensional ptychographic phase retrieval, the goal is to reconstruct a complex-valued object $\bm{z}^\ast \in \mathbb{C}^{n^2}$ and probe $\bm{Q}^\ast \in \mathbb{C}^{m^2}$ (both vectorized) from overlapping far-field diffraction patterns. The data $\bm{d}_k \in \mathbb{R}^{m^2}$ are phaseless intensity measurements acquired at overlapping scan positions. Specifically,

\begin{equation*}
    \bm{d}_k = |\mathcal{F}(\bm{Q}^\ast\odot P_k \bm{z}^\ast)|^2 + \epsilon_k, \quad k = 1, \ldots, N,
\end{equation*}
where $N$ is the total number of scans, $P_k \in \{0,1\}^{m^2 \times n^2}$ extracts the $k$th $m^2$-pixel patch of the object (a binary selection/zero-padding operator), $\mathcal{F}\in \mathbb{C}^{m^2\times m^2}$ denotes the two-dimensional discrete Fourier transform operator, $\epsilon_k \in \mathbb{R}^{m^2}$ corresponds to the noise associated with the $k$-th measurement, $\odot$ denotes the Hadamard (element-wise) product, and $|\cdot|^2$ represents the element-wise intensity. The diagonal elements of $P_k$ are $1$ for the columns corresponding to the illuminated pixels in the $k$-th scanning region. We also refer to $\bm{Q}^\ast\odot P_k \bm{z}^\ast$ as the $k$-th exit wave. The overlap ratio refers to the fraction of the probe area that overlaps between adjacent scan positions. For example, a 50\% overlap ratio means that the probe is shifted by half of its size in each direction when moving between neighboring positions.  Higher overlap ratios increase data redundancy and generally improve the stability and convergence of reconstruction algorithms, at the cost of increased measurement acquisition~\cite{Rodenburg2019}. Figure~\ref{fig:setup} illustrates the experimental setup and data acquisition process for ptychography.

\begin{figure}[htbp]
    \centering
    \includegraphics[width=0.9\textwidth]{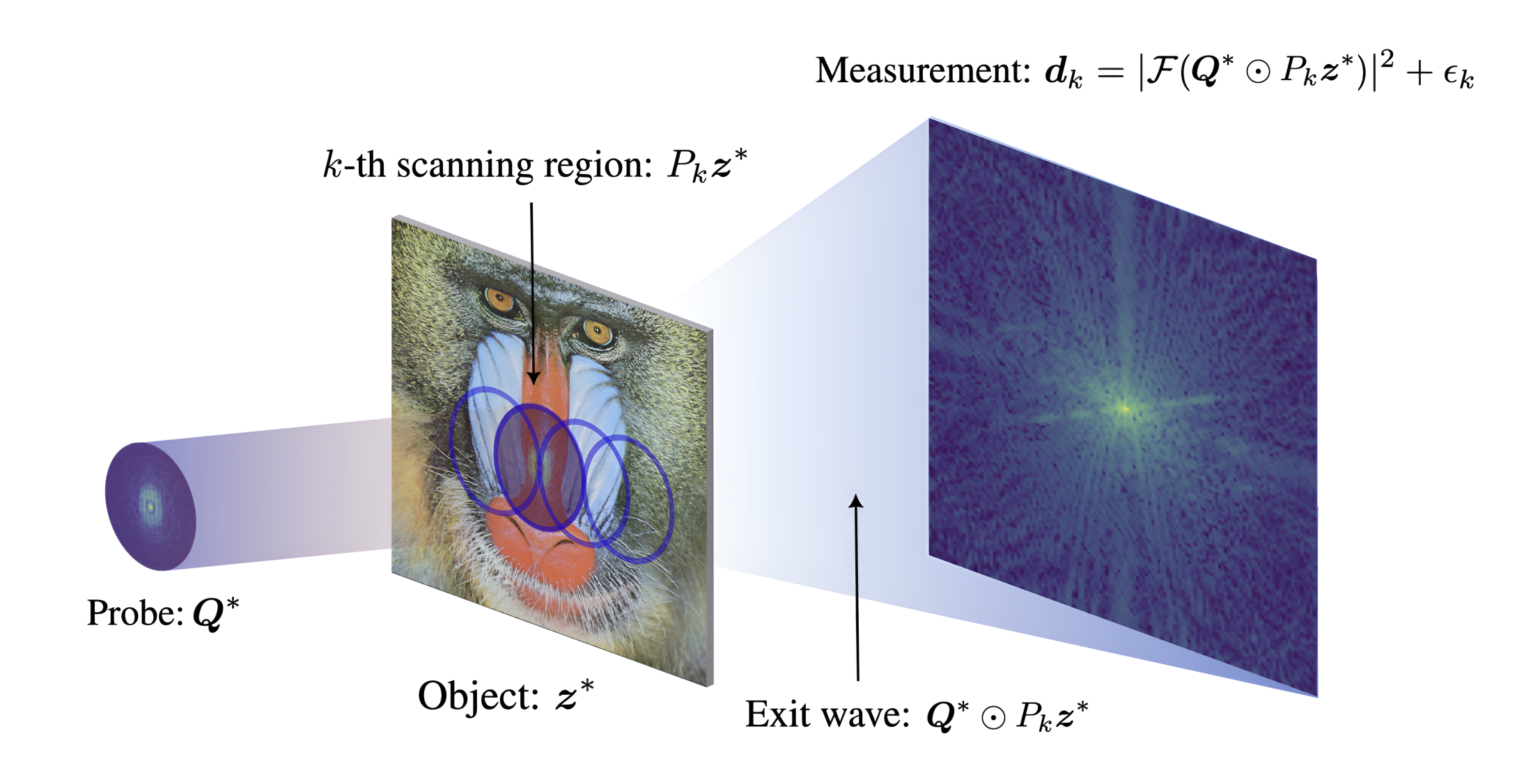}
    \caption{Experimental setup and data acquisition for ptychography.}
    \label{fig:setup}
\end{figure}

\begin{definition}[Revised Exit Wave] 
For the $k$-th scanning region, define
\begin{equation*}
\bm{z}_k = P_k \bm{z},
\end{equation*}
and define the revised exit wave as
\begin{equation*}
\mathcal{R}_k(\bm{Q},\bm{z}_k) = \mathcal{F}^{-1}\left(\sqrt{\bm{d}_k} \odot \exp\left(i\theta\left(\mathcal{F}(\bm{Q}\odot \bm{z}_k)\right)\right)\right),
\end{equation*}
where $\theta(\cdot)$ denotes the element-wise principal argument\footnote{Since $\theta(0)$ is undefined at index $r$, we set $\left[\theta\left(\mathcal{F}(\bm{Q}\odot \bm{z}_k)\right)\right]_r = 0$ when $\left[\mathcal{F}(\bm{Q}\odot \bm{z}_k)\right]_r = 0$.}, and $\sqrt{\cdot}$ acts elementwise..
\end{definition}
The inverse problem is to infer $\bm{z}^\ast$ and $\bm{Q}^\ast$ from $\{\bm{d}_k\}_{k=1}^N$, which is commonly formulated as minimizing the following misfit:
%\begin{equation*} 
%    \min_{\bm{Q},\bm{z}} \Phi(\bm{Q}, \bm{z}) = \frac{1}{2} \sum_{k=1}^N \left\| |\mathcal{F}(\bm{Q}\odot P_k \bm{z})| - \sqrt{\bm{d}_k} \right\|_2^2,
%\end{equation*}
%where $\sqrt{\cdot}$ acts elementwise. Following the terminology in~\cite{rPIE} and invoking Parseval's theorem, the problem can equivalently be solved as:
\begin{equation}\label{eq:exit_misfit}
\begin{aligned}
    \min_{\bm{Q},\bm{z}}\Phi(\bm{Q},\bm{z}) &=\sum_{k=1}^N \Phi_k(\bm{Q},\bm{z}_k),\\
    &= \frac{1}{2} \sum_{k=1}^N  \left\|\bm{Q}\odot P_k \bm{z} - \mathcal{R}_k(\bm{Q}, P_k \bm{z} ) \right\|_2^2.\\
\end{aligned}
\end{equation}
%where $\bm{z}_k$ and the operator $\mathcal{R}_k$ are defined in Definition~\ref{def:REW}.

Since $\Phi$ is real-valued on complex variables, it is not complex-differentiable in the classical sense~\cite{lang1985complex}. A common remedy is to employ the $\mathbb{C}\mathbb{R}$-calculus~\cite{CRcalculus}. 
To that end, we identify the complex vector $\bm{z}_k \in \mathbb{C}^{m^2}$ with its real and imaginary parts, writing $\bm{z}_k=\bm{x}_k+i\bm{y}_k$ and viewing it as an element of $\mathbb{R}^{2m^2}$. This lets us regard the objective as a mapping $\mathbb{R}^{2m^2}\to\mathbb{R}$. Under this identification, the complex gradient of $\Phi_k$ is
\begin{equation*}
\begin{aligned}
    \nabla_{\bm{z}_k}\Phi_k &= \nabla_{\bm{x}_k}\Phi_k + i\,\nabla_{\bm{y}_k}\Phi_k \\
    &= \overline{\bm{Q}}\odot\big(\bm{Q}\odot\bm{z}_k - \mathcal{R}_k(\bm{Q},\bm{z}_k)\big).
\end{aligned}
\end{equation*} 
For a detailed computation, see~\cite{magpie}. Similarly, we have 
\begin{equation*} 
    \nabla_{\bm{Q}} \Phi_k = \overline{\bm{z}_k}\odot\left(\bm{Q}\odot \bm{z}_k - \mathcal{R}_k(\bm{Q},\bm{z}_k)\right).
\end{equation*}

\subsection{Ptychogrpahical Iterative Engine (PIE)}
The PIE family of algorithms~\cite{PIE,ePIE,rPIE} minimizes the exit-wave misfit in Eq.~\eqref{eq:exit_misfit} via sequential, patchwise updates. In each iteration, the scan order is randomly permuted. For each selected scanning region $k$, we update the probe and the corresponding object patch by minimizing a regularized quadratic surrogate of $\Phi_k$, while holding all entries outside that region fixed. One full sweep over all regions constitutes a single iteration.

Let $\bm{z}_k^{(j)}$ and $\bm{Q}^{(j)}$ denote the current estimates (object patch for region $k$ and global probe) at iteration $j$. All products and fractions below are elementwise. The local rPIE update is
\begin{equation}\label{eqn:rpie_formulation}
\begin{aligned}
\bm{r}_k^{(j)} &\defeq \mathcal{R}_k\left(\bm{Q}^{(j)},\bm{z}_k^{(j)}\right) - \bm{Q}^{(j)}\odot\bm{z}_k^{(j)},\\
\bm{z}_k^{(j)} \leftarrow \bm{z}_k^{+} &= \operatorname{argmin}_{\bm{z}_k}
    \widetilde{\Phi}_k\left(\bm{Q}^{(j)},\bm{z}_k; j\right)
    + \frac{1}{2}\left\langle \bm{u}_{\bm{Q}}\left(\bm{Q}^{(j)}\right), \left\lvert \bm{z}_k-\bm{z}_k^{(j)}\right\rvert^{2} \right\rangle,\\
    &=\bm{z}_k^{(j)} 
+ \frac{\overline{\bm{Q}^{(j)}}}{\bm{u}_{\bm{Q}}\left(\bm{Q}^{(j)}\right) + \left\lvert \bm{Q}^{(j)} \right\rvert^{2}} \bm{r}_k^{(j)},\\
\bm{Q}^{(j)} \leftarrow \bm{Q}^{+} &= \operatorname{argmin}_{\bm{Q}}
    \widetilde{\Phi}_k\left(\bm{Q},\bm{z}_k^{(j)}; j\right)
    + \frac{1}{2}\left\langle \bm{u}_{\bm{z}}\left(\bm{z}_k^{(j)}\right), \left\lvert \bm{Q}-\bm{Q}^{(j)}\right\rvert^{2} \right\rangle,\\
    &=\bm{Q}^{(j)} 
+ \frac{\overline{\bm{z}_k^{(j)}}}{\bm{u}_{\bm{z}}\left(\bm{z}_k^{(j)}\right) + \left\lvert \bm{z}_k^{(j)} \right\rvert^{2}} \bm{r}_k^{(j)}.
\end{aligned}
\end{equation}
Here $\bm{u}_{\bm{Q}}$ and $\bm{u}_{\bm{z}}$ are regularizers that control step sizes and stability. 
Within the PIE family, \cite{rPIE} reports the best performance for
\begin{equation*}
\begin{aligned}
    \bm{u}_{\bm{Q}}(\bm{Q}) &= \alpha_{\bm{Q}}\left(\|\bm{Q}\|_\infty^{2} - \left\lvert \bm{Q} \right\rvert^{2}\right),\\
    \bm{u}_{\bm{z}}(\bm{z}) &= \alpha_{\bm{z}}\left(\|\bm{z}\|_\infty^{2} - \left\lvert \bm{z} \right\rvert^{2}\right),
\end{aligned}
\end{equation*}
with $\alpha_{\bm{Q}},\alpha_{\bm{z}}>0$ and $\|\cdot\|_\infty$ the entrywise sup norm (so $\|\bm{Q}\|_\infty^{2}=\max_r |\bm{Q}_r|^{2}$). This scheme is the regularized Ptychographic Iterative Engine (rPIE).

Intuitively, where the probe illumination is weak ($|\bm{Q}_r|\ll 1$), the revised exit wave is more noise-sensitive, so the larger penalty $\bm{u}_{\bm{Q}}$ stabilizes the update; where illumination is strong, the penalty shrinks, reflecting higher measurement confidence. This adaptive weighting accelerates convergence and mitigates noise-induced instability. 

\section{Surrogate Minimization via a Multigrid Solver}\label{sec:surrogate}
In this section, we introduce a surrogate objective for Eqn.~\eqref{eq:exit_misfit}. This surrogate not only majorizes  Eqn.~\eqref{eq:exit_misfit} (see Section~\ref{sec:property_surrogate} for a precise definition of majorization) and iteratively recovers its optimal solution, but is also sufficiently general to encompass the entire PIE family of solvers.  An equivalent formulation, presented in~\cite{MM_phaseretrieval}, has demonstrated superior performance on the general phase-retrieval problem. 

Given the current solution $(\bm{Q}^{(j)},\bm{z}^{(j)})$ at iteration $j$ (e.g., from rPIE), we define the quadratic surrogate
\begin{equation}\label{eq:quadratic_surrogate}
\begin{aligned}
    \widetilde{\Phi}(\bm{Q},\bm{z};j) &= \sum_{k=1}^N \widetilde{\Phi}_k(\bm{Q},\bm{z}_k;j),\\
    &= \frac{1}{2}\sum_{k=1}^N \left\|\bm{Q}\odot\bm{z}_k - \mathcal{R}_k\left(\bm{Q}^{(j)},\bm{z}_k^{(j)}\right)\right\|_2^2.
\end{aligned}
\end{equation}
%where $\bm{z}_k \defeq P_k\bm{z}$ and $\mathcal{R}_k$ is given in Definition~\ref{def:REW}. 
When $\left[\mathcal{F}\left(\bm{Q}^{(j)}\odot\bm{z}_k^{(j)}\right)\right]_r=0$ for some index $r$, we specify the phase as
\begin{equation*}
\left[\theta\left(\mathcal{F}\left(\bm{Q}^{(j)}\odot\bm{z}_k^{(j)}\right)\right)\right]_r =
\begin{cases}
0, & \text{if } j=0 \text{ and } \left[\mathcal{F}\left(\bm{Q}^{(j)}\odot\bm{z}_k^{(j)}\right)\right]_r=0,\\
\left[\theta\left(\mathcal{F}\left(\bm{Q}^{(j-1)}\odot\bm{z}_k^{(j-1)}\right)\right)\right]_r, & \text{if } j\ge 1 \text{ and } \left[\mathcal{F}\left(\bm{Q}^{(j)}\odot\bm{z}_k^{(j)}\right)\right]_r=0,\\
\left[\theta\left(\mathcal{F}\left(\bm{Q}^{(j)}\odot\bm{z}_k^{(j)}\right)\right)\right]_r, & \text{otherwise.}
\end{cases}
\end{equation*}

The next iteration solves
\begin{equation*} 
\left(\bm{Q}^{(j+1)},\bm{z}^{(j+1)}\right) \in \operatorname*{argmin}_{\bm{Q},\bm{z}}  \widetilde{\Phi}(\bm{Q},\bm{z};j).
\end{equation*}

Because \eqref{eq:quadratic_surrogate} is a sum over $k$, a stochastic strategy is natural: in each iteration, we randomly permute $\{1,\dots,N\}$ and update regions sequentially.\footnote{We increment $j$ after all scanning positions have been visited.}
\begin{equation*}
\left(\widetilde{\bm{Q}},\widetilde{\bm{z}}_k\right) \in \operatorname*{argmin}_{\bm{Q},\bm{z}_k}  \widetilde{\Phi}_k(\bm{Q},\bm{z}_k;j),
\qquad 
\bm{Q}^{(j)} \leftarrow \widetilde{\bm{Q}}, \bm{z}_k^{(j)} \leftarrow \widetilde{\bm{z}}_k .
\end{equation*}

A common practice is to minimize the surrogate with respect to one variable while holding the other fixed, which recovers the rPIE updates in Eq.~\eqref{eqn:rpie_formulation}. 
However, the alternating pair $\left(\bm{Q}^+,\bm{z}_k^+\right)$ need not jointly minimize $\widetilde{\Phi}_k$.

Additionally, a purely sequential update—first fixing $\bm{Q}$ to update $\bm{z}_k$, then fixing the updated $\bm{z}_k$ to update $\bm{Q}$—is ineffective: after the first step, the residual is already small, so the second step scarcely changes $\bm{Q}$. 
Instead, we update $\bm{Q}$ and $\bm{z}_k$ simultaneously as follows. 

\begin{itemize}
  \item Compute $\bm{z}_k^{+}$ by minimizing the surrogate with $\bm{Q}^{(j)}$ fixed:
  \begin{equation}\label{eq:zplus}
    \bm{z}_k^{+}
    = \operatorname*{argmin}_{\bm{z}_k}
    \widetilde{\Phi}_k\left(\bm{Q}^{(j)},\bm{z}_k; j\right).
  \end{equation}

  \item Compute $\bm{Q}^{+}$ by minimizing the surrogate with $\bm{z}_k^{(j)}$ fixed:
  \begin{equation}\label{eq:qplus}
    \bm{Q}^{+}
    = \operatorname*{argmin}_{\bm{Q}}
    \widetilde{\Phi}_k\left(\bm{Q},\bm{z}_k^{(j)}; j\right).
  \end{equation}

  \item Combine the one-variable minimizers via the geometric mean and phase alignment:
  \begin{equation}\label{eq:geomean}
  \begin{aligned}
    \widetilde{\bm{z}}_k^{2} &= \bm{z}_k^{(j)}\odot\bm{z}_k^{+},
    \qquad
    \left|\arg\left(\widetilde{\bm{z}}_k\overline{\bm{z}_k^{(j)}}\right)\right| \le \frac{\pi}{2},\\
    \widetilde{\bm{Q}}^{2} &= \bm{Q}^{(j)}\odot\bm{Q}^{+},
    \qquad \left|\arg\left(\widetilde{\bm{Q}}\overline{\bm{Q}^{(j)}}\right)\right|\le \frac{\pi}{2}.
\end{aligned}
\end{equation}
\end{itemize}

\subsection{Properties of the Surrogate Minimization}\label{sec:property_surrogate}
The surrogate in Eq.~\eqref{eq:quadratic_surrogate} has several desirable properties and is consistent with the stochastic majorization–minimization (SMM) framework~\cite{SMM_1,SMM_2,SMM_graph} (see Fig.~\ref{fig:majorization}).

\begin{figure}[htbp]
    \centering
    \includegraphics[width=0.6\textwidth]{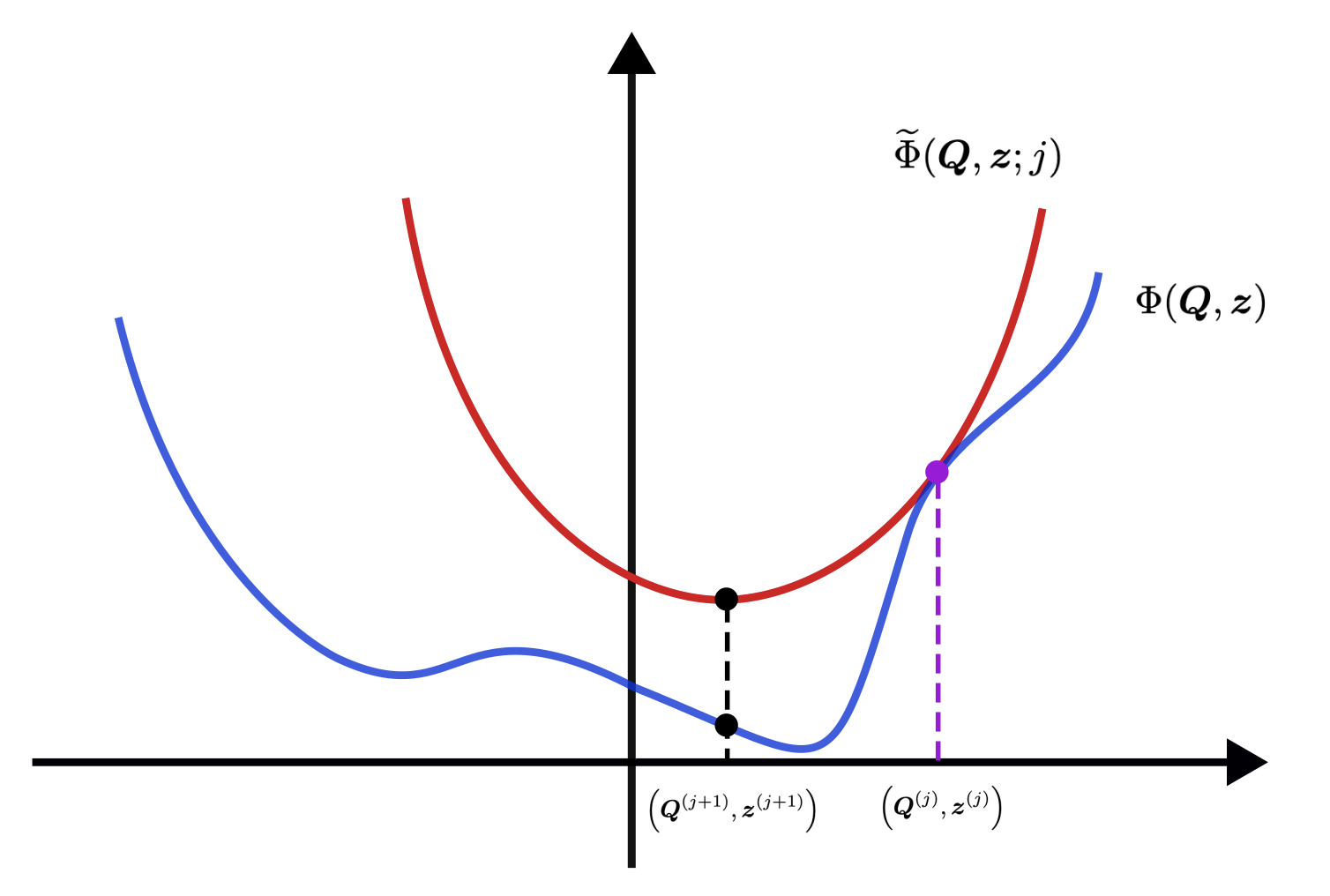}
    \caption{Illustration of the majorization property (adapted from~\cite{SMM_graph}).}
    \label{fig:majorization}
\end{figure}

\begin{proposition}[Majorization] 
Let $\widetilde{\Phi}(\bm{Q},\bm{z};j)$ be the surrogate in Eq.~\eqref{eq:quadratic_surrogate}, and let $\Phi(\bm{Q},\bm{z})$ be the exit-wave misfit in Eq.~\eqref{eq:exit_misfit}.
Then:
\begin{itemize}
    \item Objective agreement:
    \[
        \Phi\left(\bm{Q}^{(j)},\bm{z}^{(j)}\right)
        = \widetilde{\Phi}\left(\bm{Q}^{(j)},\bm{z}^{(j)};j\right).
    \]
    \item Majorization: for all $\bm{Q}\in\mathbb{C}^{m^2}$ and $\bm{z}\in\mathbb{C}^{n^2}$,
    \begin{equation*} 
        \Phi(\bm{Q},\bm{z}) \le \widetilde{\Phi}(\bm{Q},\bm{z};j).
    \end{equation*}
    \item First-order agreement: for each $k$,
    \[
        \nabla_{\bm{z}_k}\Phi_k\left(\bm{Q}^{(j)},\bm{z}_k^{(j)}\right)
        = \nabla_{\bm{z}_k}\widetilde{\Phi}_k\left(\bm{Q}^{(j)},\bm{z}_k^{(j)};j\right),
        \qquad
        \nabla_{\bm{Q}}\Phi_k\left(\bm{Q}^{(j)},\bm{z}_k^{(j)}\right)
        = \nabla_{\bm{Q}}\widetilde{\Phi}_k\left(\bm{Q}^{(j)},\bm{z}_k^{(j)};j\right).
    \]
\end{itemize}
\end{proposition}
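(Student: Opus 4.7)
My approach rests on a single pointwise inequality: for any $w\in\mathbb{C}$, $s\ge 0$, and $\psi\in\mathbb{R}$,
\[
|w - s\,e^{i\psi}|^2 \;\ge\; (|w|-s)^2,
\]
with equality iff $w=0$ or $\psi\equiv \theta(w) \pmod{2\pi}$. The plan is to lift this entrywise to the Fourier domain and apply Parseval (using the unitary DFT normalization standard in this setting).

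First I would rewrite both objectives in the Fourier domain. By the construction of $\mathcal{R}_k$ and Parseval,
\[
2\,\Phi_k(\bm{Q},\bm{z}_k) \;=\; \bigl\||\mathcal{F}(\bm{Q}\odot\bm{z}_k)| - \sqrt{\bm{d}_k}\bigr\|_2^2,
\]
since $\sqrt{\bm{d}_k}\odot e^{i\theta(\mathcal{F}(\bm{Q}\odot\bm{z}_k))}$ is by design the closest vector of modulus $\sqrt{\bm{d}_k}$ to $\mathcal{F}(\bm{Q}\odot\bm{z}_k)$ in $\ell^2$. Analogously,
\[
2\,\widetilde{\Phi}_k(\bm{Q},\bm{z}_k;j) \;=\; \bigl\|\mathcal{F}(\bm{Q}\odot\bm{z}_k) - \sqrt{\bm{d}_k}\odot e^{i\theta^{(j)}_k}\bigr\|_2^2,
\]
where $\theta^{(j)}_k \defeq \theta\bigl(\mathcal{F}(\bm{Q}^{(j)}\odot\bm{z}_k^{(j)})\bigr)$. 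Applying the pointwise inequality componentwise on the right-hand side gives $\Phi_k\le\widetilde{\Phi}_k$; summing over $k$ yields the global majorization. Evaluating at the current iterate, the frozen phase $\theta^{(j)}_k$ coincides entrywise with the phase of $\mathcal{F}(\bm{Q}^{(j)}\odot\bm{z}_k^{(j)})$, so the pointwise inequality is tight everywhere and objective agreement follows immediately.

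For first-order agreement I would bypass any envelope-type argument and differentiate directly. Since $\mathcal{R}_k(\bm{Q}^{(j)},\bm{z}_k^{(j)})$ is frozen as a constant in $\widetilde{\Phi}_k$, the surrogate is a genuine quadratic in $(\bm{Q},\bm{z}_k)$, and the $\mathbb{CR}$-calculus gives
\[
\nabla_{\bm{z}_k}\widetilde{\Phi}_k\bigl(\bm{Q}^{(j)},\bm{z}_k^{(j)};j\bigr) \;=\; \overline{\bm{Q}^{(j)}}\odot\bigl(\bm{Q}^{(j)}\odot\bm{z}_k^{(j)} - \mathcal{R}_k(\bm{Q}^{(j)},\bm{z}_k^{(j)})\bigr),
\]
which coincides exactly with the formula for $\nabla_{\bm{z}_k}\Phi_k$ recorded in Section~\ref{sec:prelim}; the $\bm{Q}$-gradient is handled symmetrically.

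The main obstacle is the irregularity of $\theta$ at zero, which can make $\Phi_k$ nonsmooth at indices where $\mathcal{F}(\bm{Q}\odot\bm{z}_k)$ vanishes. The phase convention stated after Eq.~\eqref{eq:quadratic_surrogate} pins the surrogate's phase to the previous iterate at such indices, keeping $\widetilde{\Phi}_k$ smooth; to close the gradient-matching step I would check that at any index $r$ where $[\mathcal{F}(\bm{Q}^{(j)}\odot\bm{z}_k^{(j)})]_r=0$ the residual factor $\bm{Q}^{(j)}\odot\bm{z}_k^{(j)} - \mathcal{R}_k$ (written with the same previous-iterate phase convention) produces the same value on both sides, so the displayed gradient identity still holds. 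Modulo this bookkeeping, all three claims reduce cleanly to the single pointwise inequality.
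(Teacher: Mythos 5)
Your proof is correct and follows essentially the same route as the argument the paper defers to (\cite{magpie}, \cite{MM_phaseretrieval}): the revised exit wave is the nearest point of prescribed Fourier modulus to the current exit wave, so freezing its phase at the iterate yields a quadratic that touches the misfit there and upper-bounds it everywhere, while first-order agreement follows by direct differentiation of the frozen-residual quadratic. The only loose end you flag --- indices where $\left[\mathcal{F}\left(\bm{Q}^{(j)}\odot\bm{z}_k^{(j)}\right)\right]_r=0$, at which the pointwise bound is tight for any phase choice but $\Phi_k$ itself is nonsmooth --- is a genuine measure-zero technicality that the paper's phase convention is designed to sidestep, and your handling of it is adequate.
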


\begin{proof}
See~\cite{magpie}; a related argument appears in~\cite{MM_phaseretrieval}.
\end{proof}

With the majorization–minimization framework in place, we reduce the original optimization problem to the iterative minimization of simple quadratic surrogates. 

To mitigate numerical instabilities caused by near-zero magnitudes in $\bm Q$ or $\bm z_k$, we regularize the subproblems in Eqs.~\eqref{eq:zplus} and \eqref{eq:qplus} and establish the descent property of the joint update—via a geometric-mean amplitude blend and phase alignment—in Proposition~\ref{prop:surrogate_convergence}.

\begin{proposition}\label{prop:surrogate_convergence}
Suppose $\bm{z}_k^{+}$ and $\bm{Q}^{+}$ are obtained from the regularized surrogates:
\begin{itemize}
  \item With $\bm{Q}^{(j)}$ fixed,
  \begin{equation}\label{eq:zplus_proximal}
    \bm{z}_k^{+}
    = \operatorname{argmin}_{\bm{z}_k}
    \widetilde{\Phi}_k\left(\bm{Q}^{(j)},\bm{z}_k; j\right)
    + \frac{1}{2}\left\langle \bm{u}_{\bm{Q}}\left(\bm{Q}^{(j)}\right), \left\lvert \bm{z}_k-\bm{z}_k^{(j)}\right\rvert^{2} \right\rangle.
  \end{equation}

  \item With $\bm{z}_k^{(j)}$ fixed,
  \begin{equation}\label{eq:qplus_proximal}
    \bm{Q}^{+}
    = \operatorname{argmin}_{\bm{Q}}
    \widetilde{\Phi}_k\left(\bm{Q},\bm{z}_k^{(j)}; j\right)
    + \frac{1}{2}\left\langle \bm{u}_{\bm{z}}\left(\bm{z}_k^{(j)}\right), \left\lvert \bm{Q}-\bm{Q}^{(j)}\right\rvert^{2} \right\rangle.
  \end{equation}

  \item Combine the one-variable minimizers via geometric averaging and phase alignment:
  \begin{equation*} 
  \begin{aligned}
    \widetilde{\bm{z}}_k^{2} &= \bm{z}_k^{(j)}\odot\bm{z}_k^{+},
    \qquad
    \left|\arg\left(\widetilde{\bm{z}}_k\overline{\bm{z}_k^{(j)}}\right)\right| \le \frac{\pi}{2},\\
    \widetilde{\bm{Q}}^{2} &= \bm{Q}^{(j)}\odot\bm{Q}^{+},
    \qquad \left|\arg\left(\widetilde{\bm{Q}}\overline{\bm{Q}^{(j)}}\right)\right|\le \frac{\pi}{2}.
\end{aligned}
\end{equation*}
\end{itemize}
Then
\begin{equation*}
  \widetilde{\Phi}_k\left(\widetilde{\bm{Q}}, \widetilde{\bm{z}}_k; j\right)
  \le
  \max\left\{\widetilde{\Phi}_k\left(\bm{Q}^{+}, \bm{z}_k^{(j)}; j\right),
                 \widetilde{\Phi}_k\left(\bm{Q}^{(j)}, \bm{z}_k^{+}; j\right)\right\}
  <
  \widetilde{\Phi}_k\left(\bm{Q}^{(j)}, \bm{z}_k^{(j)}; j\right).
\end{equation*}
\end{proposition}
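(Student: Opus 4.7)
The plan is to exploit the separability $\widetilde\Phi_k(\bm Q,\bm z_k;j)=\tfrac12\sum_\ell |Q_\ell z_{k,\ell}-a_\ell|^2$, with $a_\ell:=[\mathcal R_k(\bm Q^{(j)},\bm z_k^{(j)})]_\ell$, and reduce everything to pixelwise statements. The optimality conditions of the regularized subproblems \eqref{eq:zplus_proximal}--\eqref{eq:qplus_proximal} give the per-pixel closed forms $A_\ell := Q^{(j)}_\ell z^+_{k,\ell} = \mu_\ell a_\ell + (1-\mu_\ell) c_\ell$ and $B_\ell := Q^+_\ell z^{(j)}_{k,\ell} = \nu_\ell a_\ell + (1-\nu_\ell) c_\ell$, where $c_\ell := Q^{(j)}_\ell z^{(j)}_{k,\ell}$ and the shrinkage factors $\mu_\ell := |Q^{(j)}_\ell|^2/(|Q^{(j)}_\ell|^2+u_{Q,\ell})$ and $\nu_\ell := |z^{(j)}_{k,\ell}|^2/(|z^{(j)}_{k,\ell}|^2+u_{z,\ell})$ lie in $[0,1]$. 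In particular, $A_\ell$ and $B_\ell$ are genuine convex combinations of $c_\ell$ and $a_\ell$, hence lie on the closed segment $[c_\ell,a_\ell]\subset\mathbb C$.

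The strict right-hand inequality comes directly from the regularized minimization: substituting $\bm z_k=\bm z_k^{(j)}$ into the objective of \eqref{eq:zplus_proximal} and using optimality of $\bm z_k^+$ yields $\widetilde\Phi_k(\bm Q^{(j)},\bm z_k^+;j)+\tfrac12\langle \bm u_{\bm Q}(\bm Q^{(j)}),|\bm z_k^+-\bm z_k^{(j)}|^2\rangle \le \widetilde\Phi_k(\bm Q^{(j)},\bm z_k^{(j)};j)$, with the regularization term contributing a strictly positive amount unless the update is trivial on every pixel where $\bm u_{\bm Q}$ is nonzero, i.e., unless we are already at a stationary point. A symmetric argument controls $\widetilde\Phi_k(\bm Q^+,\bm z_k^{(j)};j)$.

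For the left-hand inequality I would use the pixelwise algebraic identity $(\widetilde Q_\ell \widetilde z_{k,\ell})^2 = (Q^{(j)}_\ell Q^+_\ell)(z^{(j)}_{k,\ell} z^+_{k,\ell}) = A_\ell B_\ell$ combined with the phase-alignment rule, which forces $\widetilde{\bm Q}$ and $\widetilde{\bm z}_k$ into the half-planes of $\bm Q^{(j)}$ and $\bm z_k^{(j)}$, so that $W_\ell := \widetilde Q_\ell \widetilde z_{k,\ell}$ is the square root of $A_\ell B_\ell$ whose argument is $(\arg A_\ell+\arg B_\ell)/2$. The pixelwise geometric core claim is that, because $A_\ell,B_\ell\in[c_\ell,a_\ell]$, the polar geometric mean $W_\ell$ also lies inside the closed disk centered at $a_\ell$ of radius $\max\{|A_\ell-a_\ell|,|B_\ell-a_\ell|\}$. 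Summing this pixelwise bound then produces the global inequality on $\widetilde\Phi_k(\widetilde{\bm Q},\widetilde{\bm z}_k;j)$.

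The main obstacle is the leap from a pointwise max bound to a global one, since in general $\sum_\ell\max(x_\ell,y_\ell)$ exceeds $\max\{\sum_\ell x_\ell,\sum_\ell y_\ell\}$. The argument relies on the coherence imposed by the regularizers $\bm u_{\bm Q}$ and $\bm u_{\bm z}$—both monotone in $|\bm Q^{(j)}|$ and $|\bm z_k^{(j)}|$—so that the dominance of $|A_\ell-a_\ell|$ versus $|B_\ell-a_\ell|$ is consistent across pixels and the pixelwise max matches the global one. Verifying this coherence, together with carefully treating the square-root branch when the segment $[c_\ell,a_\ell]$ passes close to the origin in $\mathbb C$ (where the midpoint-argument branch can swing by up to $\pi$), is the technical heart of the proof.
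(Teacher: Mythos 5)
You have the right skeleton---the entrywise reduction, the closed forms $A_\ell=\mu_\ell a_\ell+(1-\mu_\ell)c_\ell$ and $B_\ell=\nu_\ell a_\ell+(1-\nu_\ell)c_\ell$ on the segment $[c_\ell,a_\ell]$, and the identity $(\widetilde Q_\ell\widetilde z_{k,\ell})^2=A_\ell B_\ell$---and this matches the paper's setup exactly. But what you label the ``pixelwise geometric core claim,'' namely $|W_\ell-a_\ell|\le\max\{|A_\ell-a_\ell|,|B_\ell-a_\ell|\}$, is the entire content of the proposition, and you assert it rather than prove it. The paper supplies two ingredients you are missing. First, a Thales-disk lemma: writing $s^2=A_\ell$, $t^2=B_\ell$, $st=W_\ell$, one has $(W_\ell-A_\ell)\overline{(W_\ell-B_\ell)}=-s\overline{t}\,|s-t|^2$, so $\Re\big((W_\ell-A_\ell)\overline{(W_\ell-B_\ell)}\big)\le0$ whenever $|\arg(s\overline{t})|\le\pi/2$, which places $W_\ell$ in the closed disk with diameter $[A_\ell,B_\ell]$; since $a_\ell$ is collinear with $A_\ell,B_\ell$, the farthest point of that disk from $a_\ell$ is one of the endpoints. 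Second, the angle condition itself: the alignment rule is stated relative to $\bm z_k^{(j)}$ and $\bm Q^{(j)}$, not to $A_\ell$ and $B_\ell$, so your phrase ``so that $W_\ell$ has argument $(\arg A_\ell+\arg B_\ell)/2$'' conceals a genuine step. One must check that $\arg\big(W_\ell\overline{A_\ell}\big)\equiv\arg\big(\widetilde Q_\ell\overline{Q^{(j)}_\ell}\big)-\arg\big(\widetilde z_{k,\ell}\overline{z^{(j)}_{k,\ell}}\big)\equiv\tfrac12\arg\big(B_\ell\overline{A_\ell}\big)\pmod{2\pi}$, whose magnitude is at most $\pi/2$ because two nonzero complex numbers subtend an angle of at most $\pi$ at the origin; this computation also settles, rather than leaves open, your worry about the square-root branch when the segment passes near the origin.

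On the summation issue: your concern is legitimate (a pointwise $\max$ bound does not by itself control the $\max$ of the two sums, and the paper's own ``summing over $r$'' is equally terse at this point), but your proposed repair fails. The dominance of $1-\mu_\ell$ versus $1-\nu_\ell$ is governed by $u_{Q,\ell}/(|Q^{(j)}_\ell|^2+u_{Q,\ell})$ versus $u_{z,\ell}/(|z^{(j)}_{k,\ell}|^2+u_{z,\ell})$, and these can flip from pixel to pixel: the rPIE-type regularizers vanish where the magnitude is maximal and are large where it is small, and $|\bm Q^{(j)}|$ and $|\bm z_k^{(j)}|$ vary independently, so no ``coherence'' argument is available. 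What the pointwise bound does deliver directly is the substantive conclusion, the strict end-to-end descent: $\max\{1-\mu_\ell,1-\nu_\ell\}<1$ on each pixel, hence $\sum_\ell|W_\ell-a_\ell|^2<\sum_\ell|c_\ell-a_\ell|^2$ whenever the residual is nonzero, which is the rightmost strict inequality of the proposition.
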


\begin{proof}
It suffices to argue entrywise. Fix an index $r$ and set
\[
q=\bm{Q}^{(j)}_r,\quad
z=\left(\bm{z}^{(j)}_k\right)_r,\quad
d=\left[\mathcal{R}_k\left(\bm{Q}^{(j)},\bm{z}^{(j)}_k\right)\right]_r,\quad
u_z=\bm{u}_{\bm{z}}\left(\bm{z}^{(j)}_k\right)_r,\quad
u_q=\bm{u}_{\bm{Q}}\left(\bm{Q}^{(j)}\right)_r,
\]
\[
q^{+}=\bm{Q}^{+}_r,\quad
z^{+}=(\bm{z}_k^{+})_r,\quad
\tilde q=\widetilde{\bm{Q}}_r,\quad
\tilde z=(\widetilde{\bm{z}}_k)_r.
\]
From \eqref{eq:zplus_proximal}–\eqref{eq:qplus_proximal},
\begin{equation}\label{eq:zplus-entry}
    z^{+}=z+\frac{\overline{q}}{|q|^{2}+u_q}(d-qz),
    \qquad
    q^{+}=q+\frac{\overline{z}}{|z|^{2}+u_z}(d-qz).
\end{equation}
Let $e \defeq qz-d$ and define
\[
\alpha \defeq \frac{u_q}{|q|^{2}+u_q}\in(0,1),\qquad
\beta  \defeq \frac{u_z}{|z|^{2}+u_z}\in(0,1).
\]
Then \eqref{eq:zplus-entry} implies
\begin{equation}\label{eq:X-Y-collinear}
    X \defeq qz^{+}=d+\alpha e,\qquad
    Y \defeq q^{+}z=d+\beta e,
\end{equation}
so $X-d$ and $Y-d$ are positive multiples of $e$ (hence $X$ and $Y$ lie on the line through $d$ with direction $e$).

By the geometric–mean construction,
\[
\tilde z^{2}=zz^{+},\qquad \tilde q^{2}=qq^{+},\qquad
\left|\arg(\tilde z)-\arg(z)\right|\le \tfrac{\pi}{2},\quad
\left|\arg(\tilde q)-\arg(q)\right|\le \tfrac{\pi}{2}.
\]
Let $w \defeq \tilde q\tilde z$. Then
\begin{equation*} 
    w^{2}=(\tilde q^{2})(\tilde z^{2})=(qq^{+})(zz^{+})=(qz^{+})(q^{+}z)=XY.
\end{equation*}
and our branch choice yields
\begin{equation}\label{eq:mid-angle}
   \left|\arg\left(w\overline{X}\right)\right| \leq \frac{\pi}{2},\quad\text{and}\quad
    \left|\arg\left(w\overline{Y}\right)\right| \leq \frac{\pi}{2}.
\end{equation}

\emph{Geometric lemma (Thales disk).}
Let $s,t\in\mathbb{C}$ with $s^{2}=X$, $t^{2}=Y$, and $st=w$. Then
\[
(w-X)\overline{(w-Y)}=(st-s^{2})\overline{(st-t^{2})}
= -s\overline{t}\left\lvert s-t\right\rvert^{2},
\]
so $\Re\left((w-X)\overline{(w-Y)}\right)\le 0$ whenever $\Re(s\overline t)\ge 0$, which is equivalent to $|\arg(s\overline t)|\leq\frac{\pi}{2}$.
Indeed, by Eqn.~\eqref{eq:mid-angle}, we have
\begin{equation*}
        |\arg(s\overline t)| = \frac{1}{2}|\arg(ss\overline{tt})| = \frac{1}{2}|\arg(X\overline{Y})| = \frac{1}{2}|\arg(X\overline{w}w\overline{Y})|\leq \frac{1}{2}(|\arg(X\overline{w})|+|\arg(w\overline{Y})|)\leq \frac{\pi}{2}.
\end{equation*}
Therefore, $w$ lies in the closed disk with diameter $[X,Y]$.

\emph{Distance to a point on the diameter line.}
Because $d$ lies on the line through $X$ and $Y$ (by \eqref{eq:X-Y-collinear}), a projection argument yields
\begin{equation*} 
    |w-d| \le \max\{|X-d|,|Y-d|\}.
\end{equation*}
Using \eqref{eq:X-Y-collinear}, the right-hand side equals
$\max\{\alpha|e|,\beta|e|\}=\max\{\alpha,\beta\}|qz-d|$.
Thus
\begin{equation*} 
    |\tilde q\tilde z-d|
    \le
    \max\left\{\frac{u_q}{|q|^{2}+u_q},\frac{u_z}{|z|^{2}+u_z}\right\}|qz-d|
    < |qz-d|,
\end{equation*}
with strict inequality whenever $e\neq 0$ and $u_q,u_z>0$. Summing over $r$ yields the claim.
\end{proof}

The quadratic surrogate in~\eqref{eq:quadratic_surrogate} is tight and majorizes the exit–wave misfit. At each iteration, we sample a scan index $k$, build the corresponding regularized surrogate $\widetilde{\Phi}_k(\bm Q,\bm z_k;j)$, compute its one–variable minimizers in closed form, and combine them via a geometric–mean amplitude rule with phase alignment to obtain an exact proximal update of $(\bm Q,\bm z_k)$. This update strictly decreases the sampled surrogate (cf. Proposition~\ref{prop:surrogate_convergence}). For general theoretical background on stochastic majorization–minimization, see~\cite{SMM_1,SMM_2,SMM_graph}.

\subsection{Extended MAGPIE (eMAGPIE)} 
The Multilevel–Adaptive–Guided Ptychographic Iterative Engine (MAGPIE)~\cite{magpie} accelerates the object–patch update by coupling a majorizing quadratic surrogate with a multigrid proximal scheme. For a fixed probe \(\bm{Q}^{(j)}\), the fine-grid surrogate takes the form:
\begin{equation}\label{eq:magpie-fine}
\begin{aligned}
    \min_{\bm{z}_k}\, &\widetilde{\Phi}_k\left(\bm{Q}^{(j)},\bm{z}_k;j\right)+ \frac{1}{2}\left\langle \bm{u}_{\bm{Q}}\left(\bm{Q}^{(j)}\right), \left\lvert \bm{z}_k-\bm{z}_k^{(j)}\right\rvert^{2} \right\rangle,\\
    = &\frac12\left\|\bm{Q}^{(j)}\odot\bm{z}_k - \mathcal{R}_k\left(\bm{Q}^{(j)},\bm{z}^{(j)}_k\right)\right\|_2^2 + \frac{1}{2}\left\langle \bm{u}_{\bm{Q}}\left(\bm{Q}^{(j)}\right), \left\lvert \bm{z}_k-\bm{z}_k^{(j)}\right\rvert^{2} \right\rangle.
\end{aligned}
\end{equation}
A corresponding coarse-grid surrogate is formed by restricting the terms in \eqref{eq:magpie-fine}:
\begin{equation}\label{eq:magpie-coarse}
\begin{aligned}
    \min_{\bm{z}_{H,k}}\,&\widetilde{\Phi}_{H,k}\left(\bm{Q}_H^{(j)},\bm{z}_{H,k};j\right)+ \frac{1}{2}\left\langle \bm{u}_{H,\bm{Q}}\left(\bm{Q}^{(j)}\right), \left\lvert \bm{z}_{H,k}-\bm{z}_{H,k}^{(j)}\right\rvert^{2} \right\rangle,\\
    = &\frac12\left\|\bm{Q}_H^{(j)}\odot\bm{z}_{H,k} - \mathcal{R}_{H,k}\left(\bm{Q}^{(j)},\bm{z}^{(j)}_k\right)\right\|_2^2+ \frac{1}{2}\left\langle \bm{u}_{H,\bm{Q}}\left(\bm{Q}^{(j)}\right), \left\lvert \bm{z}_{H,k}-\bm{z}_{H,k}^{(j)}\right\rvert^{2} \right\rangle.
\end{aligned}   
\end{equation}
Specifically, we use weights
\begin{equation}\label{eqn:def_weights}
    \bm{W}_{\bm{z}} = \frac{\left|\bm{Q}^{(j)}\right|^2}{\bm{I}_H^h\bm{I}_h^H\left|\bm{Q}^{(j)}\right|^2},\qquad
    \bm{W}_{\mathcal R} = \frac{\left(\bm{I}_H^h\bm{Q}^{(j)}_H\right)\odot\bm{W}_{\bm{z}}}{\bm{Q}^{(j)}},\qquad
    \bm{W}_{\bm{u}}^H = \frac{\left|\bm{Q}^{(j)}_H\right|^2}{\bm{I}_h^H\left|\bm{Q}^{(j)}\right|^2},
\end{equation}
where \(\bm{I}_h^H\) and \(\bm{I}_H^h=4(\bm{I}_h^H)^\top\) denote restriction and prolongation operators satisfying
\(\bm{I}_h^H\bm{I}_H^h=\bm{I}_{n_H^2}\), and we restrict the terms as
\begin{equation}\label{eq:downsampled_terms}
    \begin{aligned}
        \bm{Q}^{(j)}_H&=\bm{I}_h^H\bm{Q}^{(j)},\\
        \bm{z}^{(j)}_{H,k}&=\bm{I}_h^H\left(\bm{W}_{\bm z}\odot\bm{z}^{(j)}_k\right),\\
        \mathcal{R}_{H,k}\left(\bm{Q}^{(j)},\bm{z}^{(j)}_k\right)&=\bm{I}_h^H\left(\bm{W}_{\mathcal R}\odot\mathcal{R}_k\left(\bm{Q}^{(j)},\bm{z}^{(j)}_k\right)\right),\\
        \bm{u}_H\left(\bm{Q}^{(j)}\right) &=\bm{W}_{\bm{u}}^H\odot\bm{I}_h^H\bm{u}_{\bm{Q}}\left(\bm{Q}^{(j)}\right).
    \end{aligned}
\end{equation}

Solving \eqref{eq:magpie-fine}–\eqref{eq:magpie-coarse} in closed form yields
\begin{equation*}
    \begin{aligned}
\widehat{\bm{z}}^{(j)}_{H,k}
&= \bm{z}^{(j)}_{H,k}
   + \frac{\overline{\bm{Q}^{(j)}_H}}{\bm{u}_H\left(\bm{Q}^{(j)}\right)+|\bm{Q}_H|^2}\odot\left(\mathcal{R}_{H,k}\left(\bm{Q}_H^{(j)},\bm{z}^{(j)}_{H,k}\right)-\bm{Q}^{(j)}_H\odot\bm{z}^{(j)}_{H,k}\right),\\
\bm{z}'_{k}
&= \bm{z}^{(j)}_{k} + \bm{I}_H^h\left(\widehat{\bm{z}}^{(j)}_{H,k}-\bm{z}^{(j)}_{H,k}\right),\\
\bm{z}^+_{k}
&= \bm{z}'_{k}
   + \frac{\overline{\bm{Q}^{(j)}}}{\bm{u}\left(\bm{Q}^{(j)}\right)+|\bm{Q}^{(j)}|^2}\odot\left(\mathcal{R}_k\left(\bm{Q}^{(j)},\bm{z}^{(j)}_k\right)-\bm{Q}^{(j)}\odot\bm{z}'_{k}\right).
\end{aligned}
\end{equation*}

In~\cite{magpie}, we established several desirable properties of MAGPIE:

\begin{itemize}
\item \textbf{Well-defined weights:} \(\|\bm{W}_{\bm{z}}\|_\infty\le4\), \(\|\bm{W}_{\mathcal R}\|_\infty\le4\), and \(\|\bm{W}_{\bm{u}}^H\|_\infty\le1\) prevent blow-up when \(|\bm{Q}^{(j)}|\) is small.
\item \textbf{Consistency.} The coarse-grid surrogate and its gradient are consistent with their fine-grid counterparts:
\[\widetilde{\Phi}_{H,k}\left(\bm{Q}_H^{(j)},\bm{z}_{H,k};j\right)\leq\frac{1}{4}\|\bm{W}_{\mathcal{R}}\|_\infty^2\widetilde{\Phi}_k\left(\bm{Q}^{(j)},\bm{z}_k;j\right),\]
\[\left\|\nabla_{\bm{z}_{H,k}}\widetilde{\Phi}_{H,k}\left(\bm{Q}_H^{(j)},\bm{z}_{H,k};j\right)\right\|_2\leq\frac{1}{2}\|\bm{W}_{\bm{u}}\|_\infty\left\|\nabla_{\bm{z}_{k}}\widetilde{\Phi}_k\left(\bm{Q}^{(j)},\bm{z}_k;j\right)\right\|_2.\]
\item \textbf{Descent.} The coarse proximal step \eqref{eq:coarse-prox} yields a coarse-to-fine direction \(\bm{I}_H^h\left(\widehat{\bm{z}}_{H,k}-\bm{z}^{(j)}_{H,k}\right)\) that is a descent direction for \(\widetilde{\Phi}_k\) at \(\bm{z}^{(j)}_k\):
\[\nabla_{\bm{z}_{k}}\widetilde{\Phi}_k\left(\bm{Q}^{(j)},\bm{z}_k;j\right)^\ast\bm{I}_H^h\left(\widehat{\bm{z}}_{H,k}-\bm{z}^{(j)}_{H,k}\right)\leq0.\]
\item \textbf{Automatic regularization transfer.}
With \(\bm{u}_H\left(\bm{Q}^{(j)}\right)\) chosen as in Eq.~\eqref{eq:downsampled_terms}, the stability of fine- and coarse-grid updates is matched without manual tuning.
\end{itemize}
Rigorous statements and proofs are given in~\cite{magpie}.

We employ MAGPIE for the object update in Eqn.~\eqref{eq:zplus} and a geometric-mean, phase-aligned rule for the joint object–probe update in Eqn.~\eqref{eq:geomean}; we call the resulting solver extended MAGPIE (eMAGPIE).

\section{Numerical Examples}\label{sec:experiments}

In this section, we present numerical examples to demonstrate the performance of eMAGPIE. We compare our solver with the classical method rPIE~\cite{rPIE}.  To ensure a fair comparison with rPIE, we adopt its regularization terms for the object-patch update as the fine-grid regularization for eMAGPIE, i.e.,
\begin{equation*}
  \bm{u}_{\bm{Q}}\left(\bm{Q}^{(j)}\right) = \alpha\left(\left\|\bm{Q}^{(j)}\right\|_{\infty}^2 - \left|\bm{Q}^{(j)}\right|^2\right).
\end{equation*}

For both algorithms, we use the ePIE probe update:
\begin{equation*}
  \bm{u}_{\bm{z}}\left(\bm{z}_k^{(j)}\right) = \left\|\bm{z}_k^{(j)}\right\|_{\infty}^2 - \left|\bm{z}_k^{(j)}\right|^2.
\end{equation*}

We observe that the computational bottleneck at each iteration stems from the Fourier transforms required to evaluate $\mathcal{R}_k\left(\bm{Q}^{(j)}\bm{z}_k^{(j)}\right)$. Since this term is computed only once per scanning region at each iteration, eMAGPIE and rPIE share comparable per-iteration computational costs. 

We first describe the problem setup, and then in  Sections~\ref{sec:overlap_ratio} and \ref{sec:noise}, we evaluate the performance of the algorithms from two perspectives: the effect of the overlap ratio and robustness to noise. Finally, Section \ref{sec:real_data} evaluates the algorithms on real datasets.

\subsection{Experiment Setup}\label{sec:setup}
\paragraph{Probe}  
In our numerical experiments, we use a simulated Fresnel zone plate as the probe (see~\cite{Ptychography}). The standard probe size is $m=128$. Figure~\ref{fig:all_mag_phase} shows the probe’s magnitude and phase. The magnitude exhibits the characteristic circular structure of a zone plate, while the phase displays the corresponding alternating concentric patterns. Notably, much of the probe outside the central region has a very low magnitude, which can lead to ill-conditioning.

\paragraph{Object}  
We test on a synthetic object that is constructed by combining two standard test images: the Baboon image for magnitude and the Cameraman image for phase. The Baboon image is normalized to $[0,1]$ to represent magnitude, and the Cameraman image is scaled to $[0,\tfrac{\pi}{2}]$ to provide the phase distribution. The object is represented at a resolution of $n=512$ and displayed in Figure~\ref{fig:all_mag_phase}.

\paragraph{Probe and object initialization}
\begin{itemize}
    \item \textbf{Synthetic probe.} We form the initial probe $\bm Q^{(0)}$ from the ground-truth probe $\bm Q^\ast$ by (i) smoothing its magnitude with a Gaussian blur in the Fourier domain to remove fine structure and (ii) adding low–order phase aberrations. Concretely, we use a Gaussian blur with $\sigma_{\mathrm{amp}}=2$ pixels, a random linear phase ramp of total magnitude $\approx 7$~rad across the field of view, and quadratic terms for defocus and astigmatism with coefficients $0.6$ and $0.2$, respectively; a small complex noise of $1\%$ amplitude is also added. This yields
\[
\bm Q^{(0)} = \left(\mathcal{G}_{\sigma_{\mathrm{amp}}}|\bm Q^\ast|\right)
               \exp\left\{\mathrm{i}[a_x x + a_y y + 0.6(x^2{+}y^2) + 0.2(x^2{-}y^2)]\right\},
\]
with $(a_x,a_y)$ drawn uniformly to produce a $\sim 7$~rad ramp.

    \item \textbf{Normalization and object initialization.}  In both cases, we scale the probe to a fixed $\ell_2$ norm set by the measured diffraction data $\{\bm d_k\}_{k=1}^N$. Let $\overline{\bm d}=\frac{1}{N}\sum_{k=1}^N \bm d_k$ and let $m$ be the probe side lengths (in pixels). We define
    \[
    \mathrm{dp\_avg} = \frac{\sqrt{\sum_{i,j}\overline{\bm d}_{ij}}}{m},
    \qquad
    \bm Q^{(0)} \leftarrow \bm Q^{(0)} \frac{\mathrm{dp\_avg}}{\|\bm Q^{(0)}\|_2}.
    \]
    The object is initialized to a constant field, $\bm z^{(0)} \equiv 1$ (unit amplitude, zero phase).
\end{itemize}

\begin{figure}[htbp]
    \centering
    \includegraphics[width=0.6\textwidth]{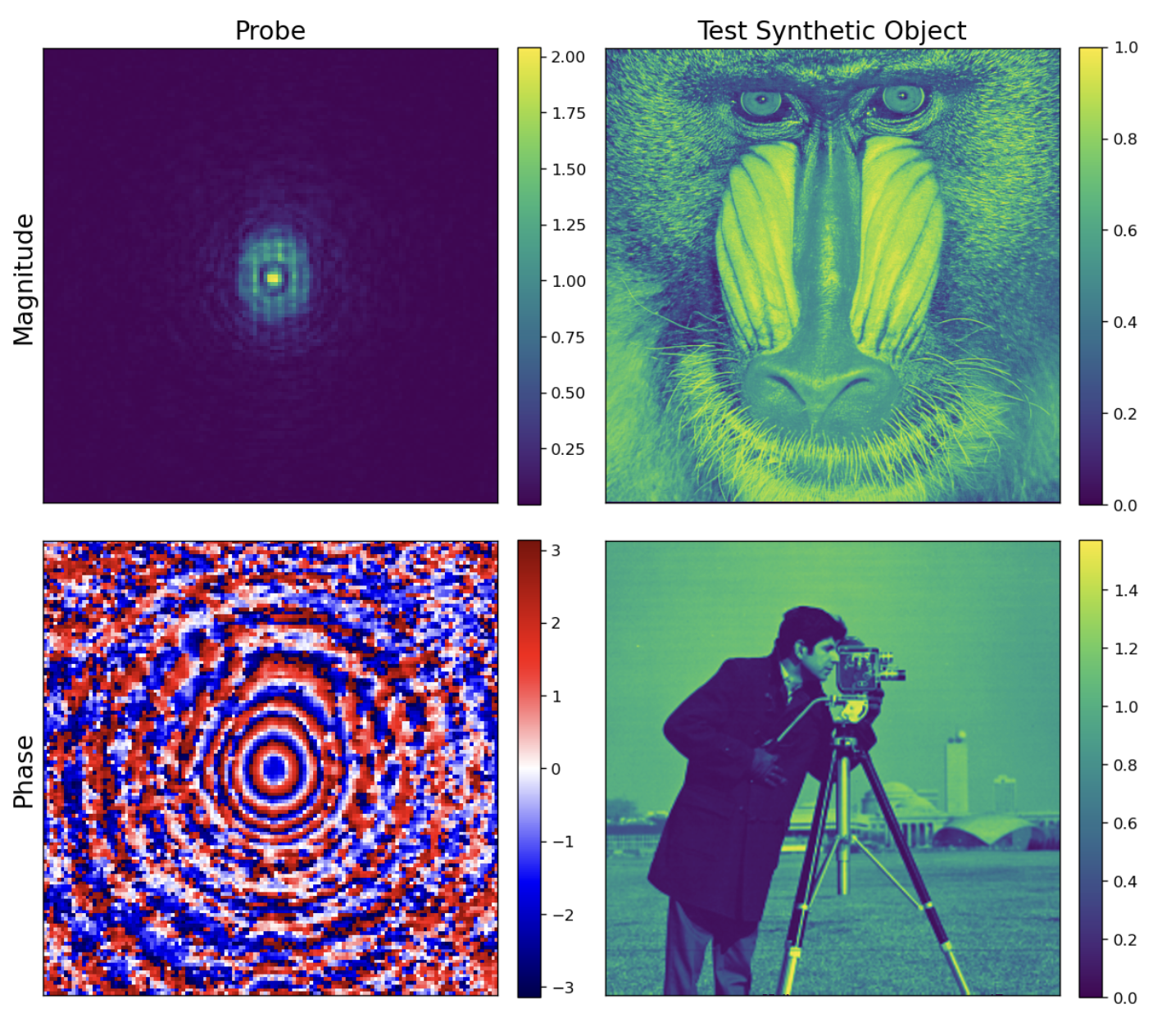}
    \caption{Columns (left to right) show the magnitude (top) and phase (bottom) of the two inputs used in our numerical experiments: 
    (1) the simulated Fresnel zone‐plate probe and 
    (2) the test synthetic object (magnitude from the Baboon image; phase from the Cameraman image).}
    \label{fig:all_mag_phase}
\end{figure}

\paragraph{Noise level}
Given noiseless intensities $\{\widetilde{\bm{d}}_k\}_{k=1}^N$ and noisy measurements $\{\bm{d}_k\}_{k=1}^N$, define the noise realizations $\bm{n}_k  \defeq  \bm{d}_k - \widetilde{\bm{d}}_k$. 
We report the noise level as a {noise amplitude percentage} based on 2 norms:
\[
\text{Noise\% (amplitude)} 
= 100 \sqrt{\frac{\sum_{k=1}^N \|\bm{n}_k\|_2^2}{\sum_{k=1}^N \|\widetilde{\bm{d}}_k\|_2^2}}  .
\]

\paragraph{Quality metrics}
We evaluate the reconstruction quality using two metrics: \emph{residual} and \emph{error}. For a reconstruction $\bm{z}$, the residual is $\Phi(\bm{z})$. The error is measured as the absolute $\ell^2$-norm difference between the element-wise magnitudes of the reconstruction and the ground truth object, i.e., $\||\bm{z}| - |\bm{z}^*|\|_2$.

\paragraph{Stopping criterion}
Let \((\bm{Q}^{(t)},\bm{z}^{(t)})\) denote the iterates at the outer iteration \(t\), and let the residual be the exit-wave misfit (see Eqn.~\eqref{eq:exit_misfit}) 
\[
r_t  \defeq  \Phi\left(\bm{Q}^{(t)},\bm{z}^{(t)}\right)
\]
as in \eqref{eq:exit_misfit}. We monitor the \(w\)-point moving average
\[
\bar r_t  \defeq  \frac{1}{w}\sum_{i=0}^{w-1} r_{t-i},\qquad
\bar r_t^\star  \defeq  \min_{s\le t}\bar r_s .
\]
With \(w=5\) (window) and \(p=10\) (patience), the run terminates at the first \(t\ge w\) for which
\[
\bar r_t \ge \bar r_t^\star
\]
holds for \(p\) consecutive checks.

\subsection{Overlap Ratio}\label{sec:overlap_ratio}

We compare eMAGPIE and rPIE under three probe–object overlap ratios: $50\%$, $75\%$, and $87.5\%$. We set the regularization constant $\alpha$ according to the overlap: $\alpha=0.05$ at $50\%$ overlap, and $\alpha=0.01$ at $75\%$ and $87.5\%$ overlap.
Convergence metrics and reconstructions for all three settings are shown in Fig.~\ref{fig:low_overlap_combined}, \ref{fig:mid_overlap_combined}, and \ref{fig:high_overlap_combined}; in all cases, eMAGPIE attains lower misfits and produces higher-quality reconstructions.

\begin{figure}[ht]
  \centering
  \includegraphics[width=.65\textwidth]{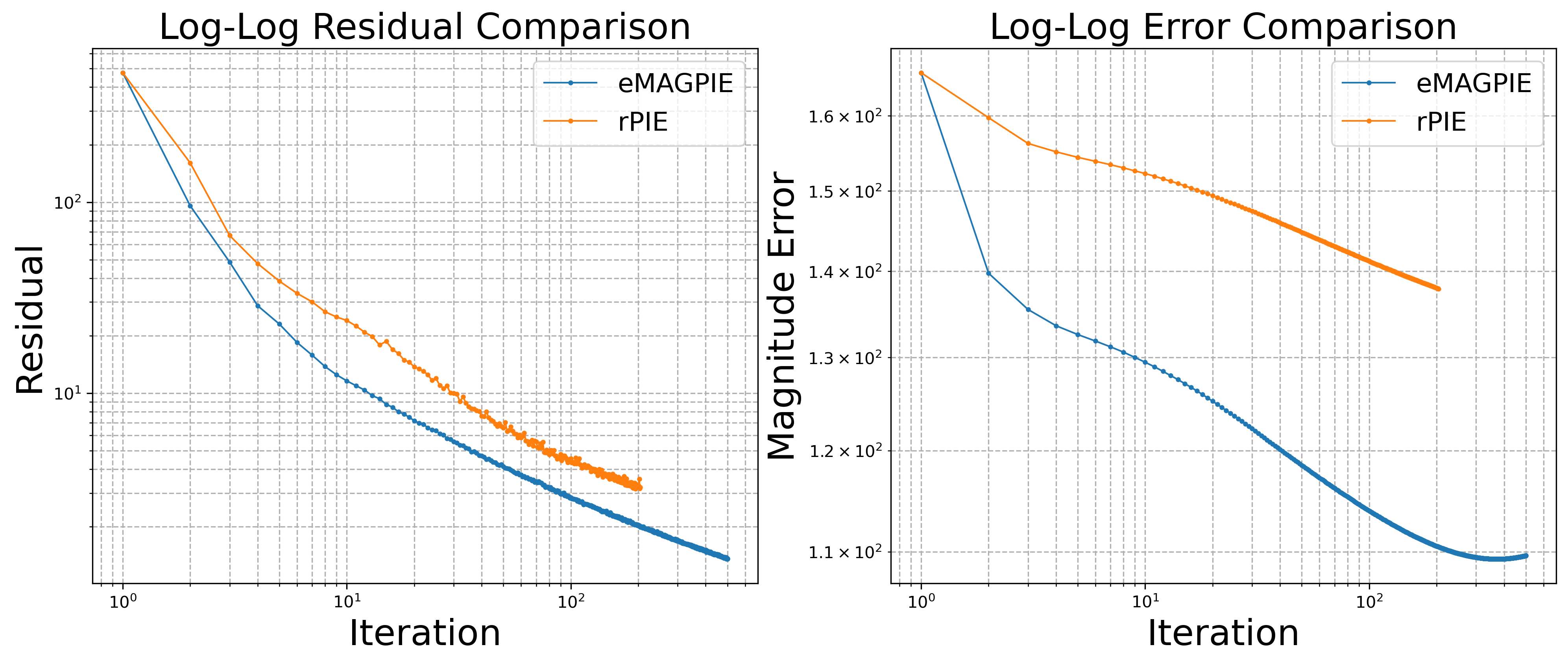}
  \includegraphics[width=.65\textwidth]{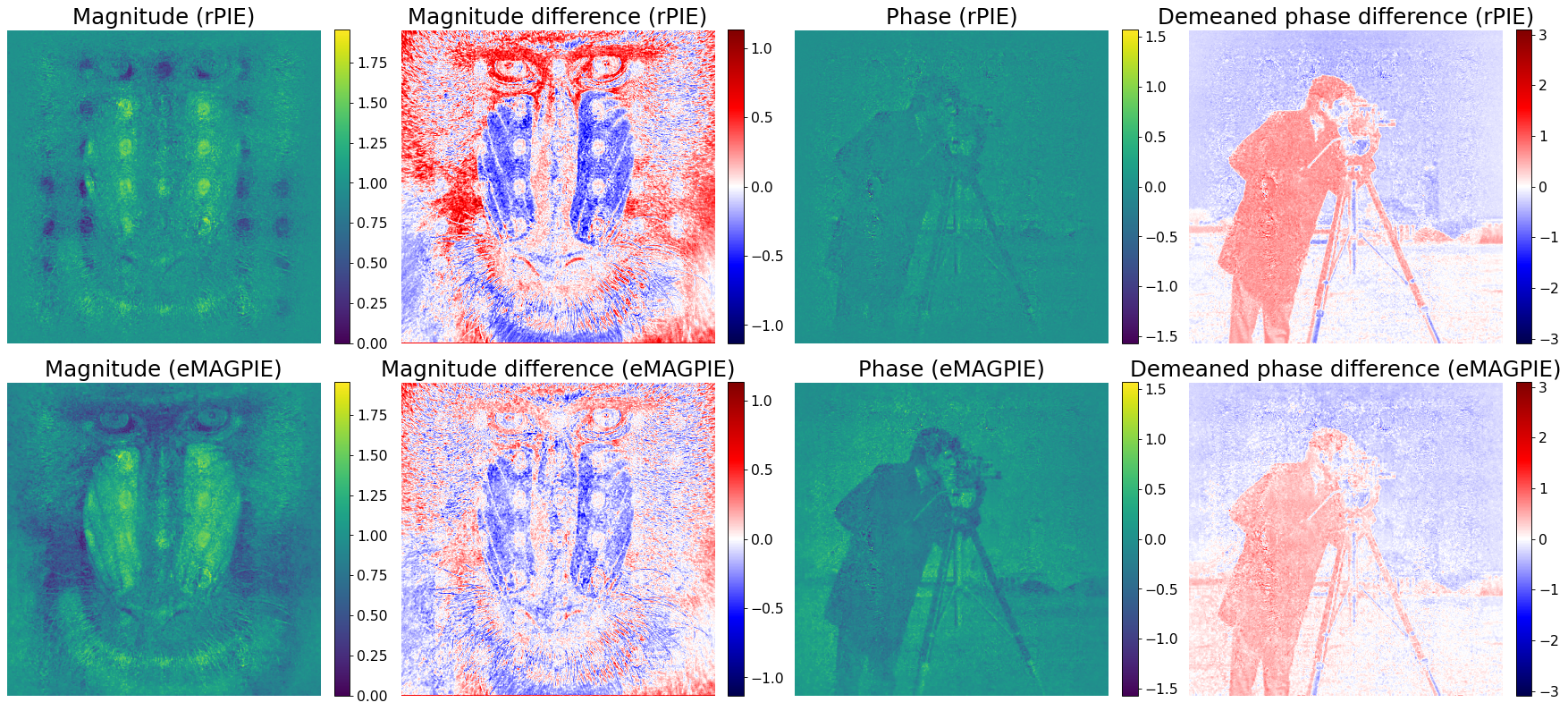}
  \caption{Synthetic data, $50\%$ overlap. \textbf{Top:} convergence curves for rPIE and eMAGPIE showing the data misfit residual and the reconstruction error versus iteration. \textbf{Bottom:} final reconstructions for each method (magnitude and demeaned phase), together with their corresponding error maps against the ground truth.  Both algorithms use the same object regularization constant $\alpha = 0.01$ and ePIE-style probe update, and early stopping based on the moving average of the last five residuals.}
  \label{fig:low_overlap_combined}
\end{figure}

\begin{figure}[ht]
  \centering
  \includegraphics[width=.65\textwidth]{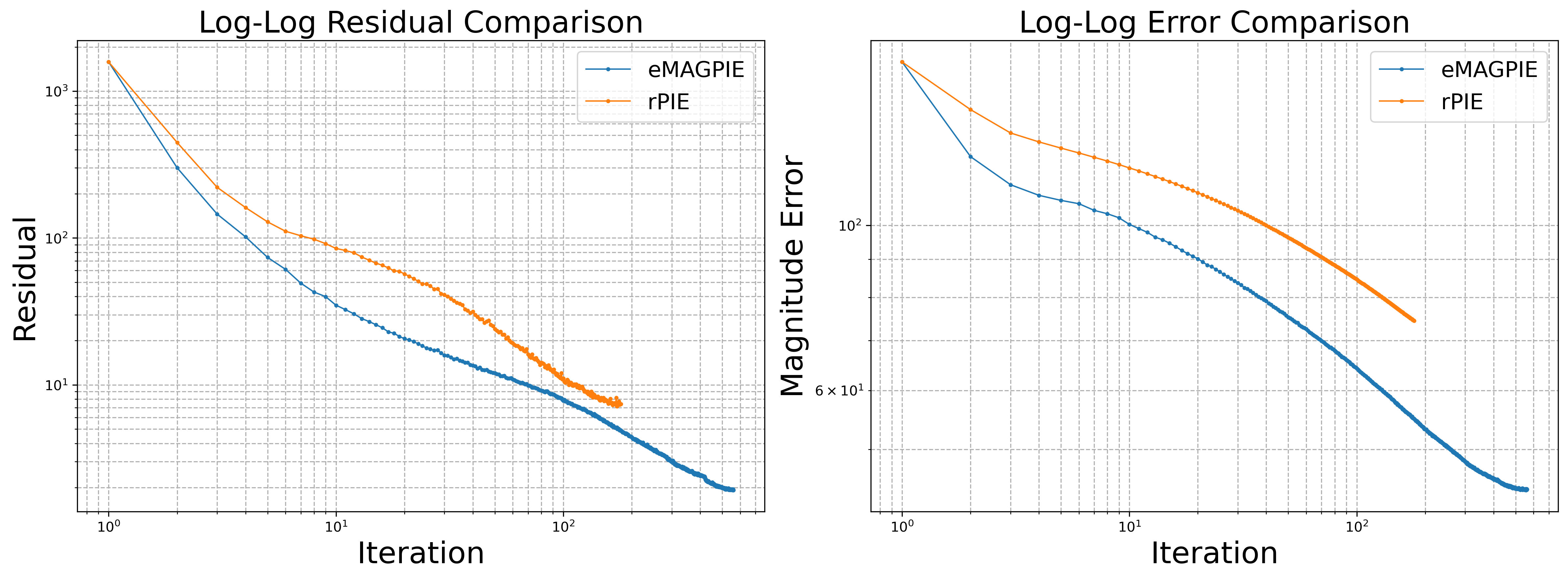}
  \includegraphics[width=.65\textwidth]{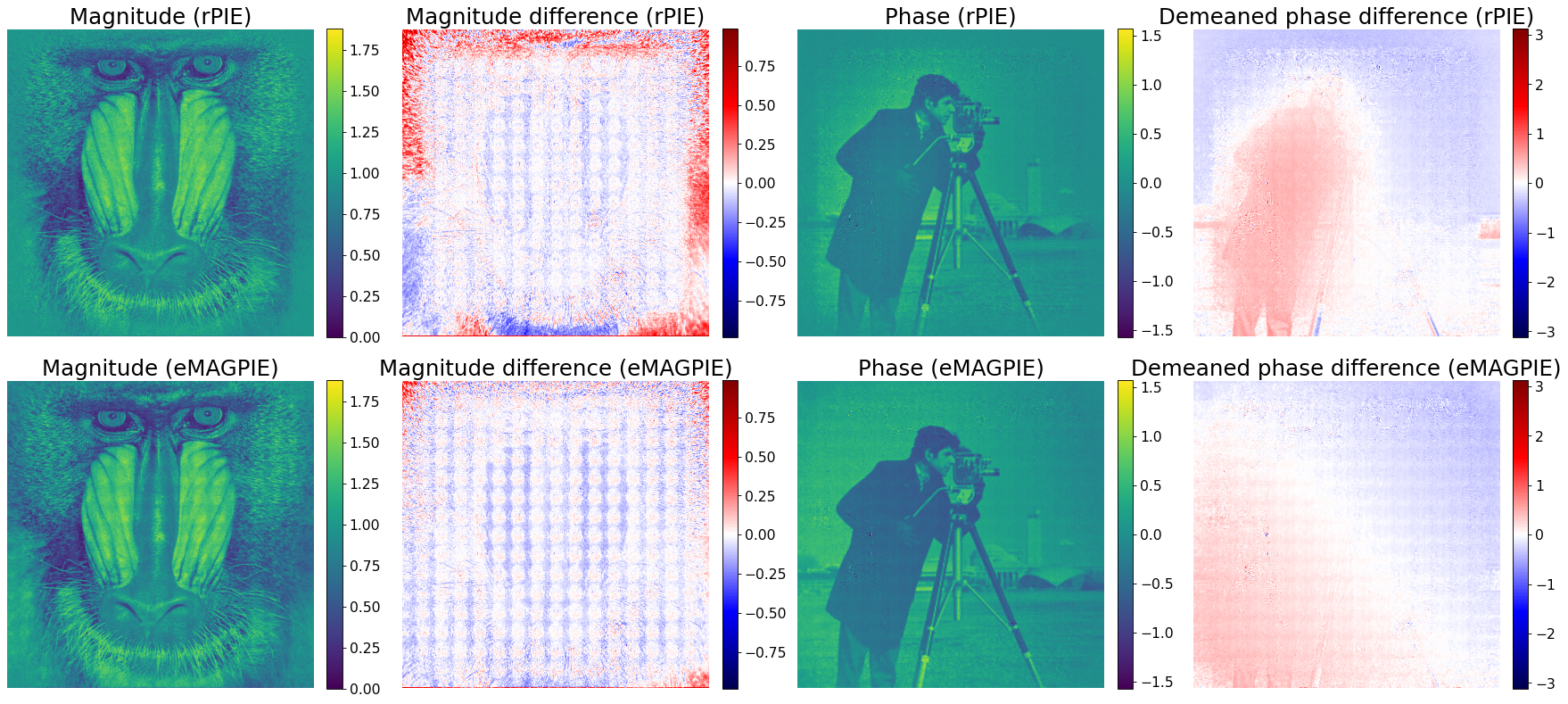}
  \caption{Synthetic data, $75\%$ overlap. \textbf{Top:} convergence curves for rPIE and eMAGPIE showing the data misfit residual and the reconstruction error versus iteration. \textbf{Bottom:} final reconstructions for each method (magnitude and demeaned phase), together with their corresponding error maps against the ground truth.  Both algorithms use the same object regularization constant $\alpha = 0.01$ and ePIE-style probe update, and early stopping based on the moving average of the last five residuals.}
  \label{fig:mid_overlap_combined}
\end{figure}

\begin{figure}[ht]
  \centering
  \includegraphics[width=.65\textwidth]{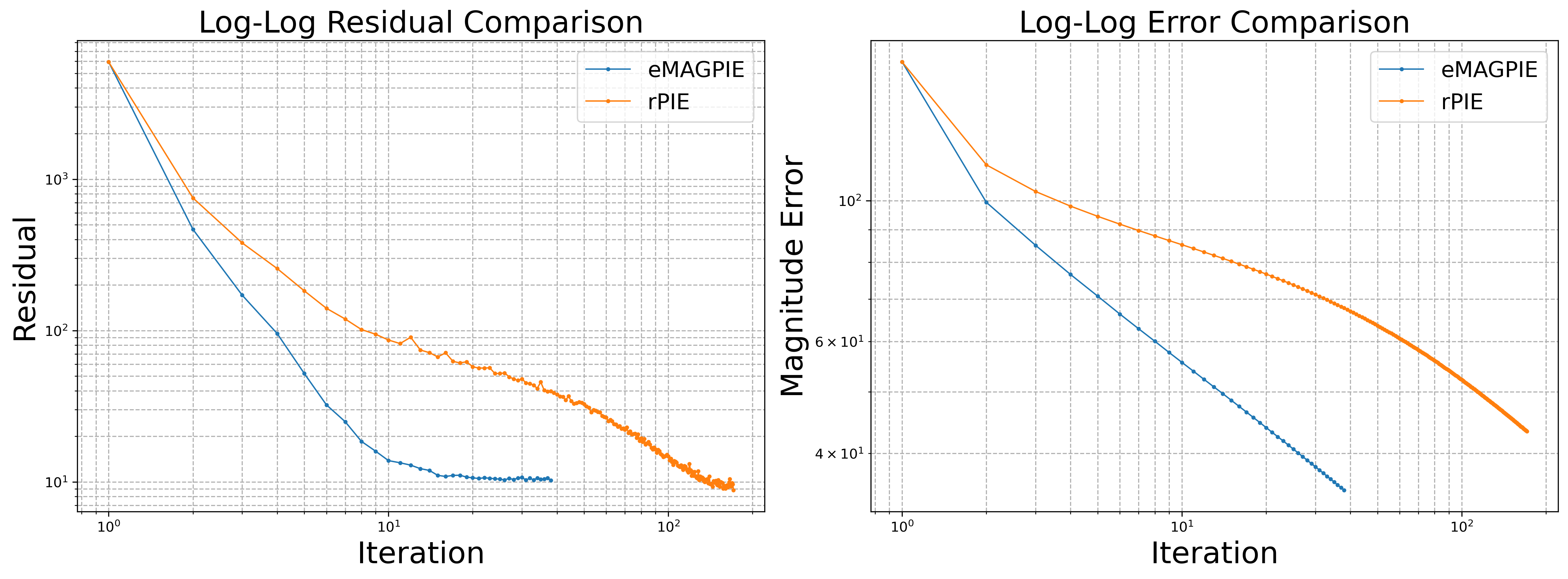}
  \includegraphics[width=.65\textwidth]{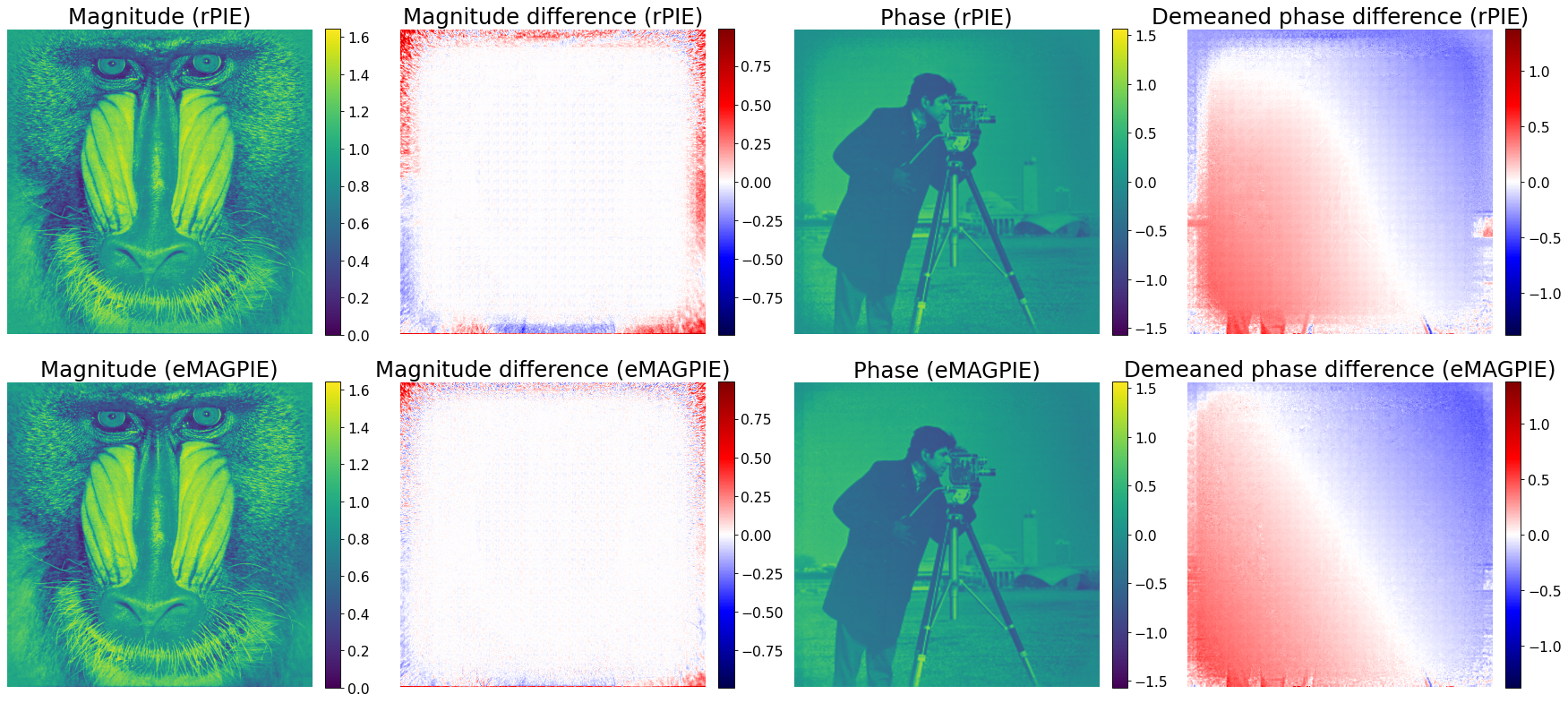}
  \caption{Synthetic data, $87.5\%$ overlap. \textbf{Top:} convergence curves for rPIE and eMAGPIE showing the data misfit residual and the reconstruction error versus iteration. \textbf{Bottom:} final reconstructions for each method (magnitude and demeaned phase), together with their corresponding error maps against the ground truth.  Both algorithms use the same object regularization constant $\alpha = 0.05$ and ePIE-style probe update, and early stopping based on the moving average of the last five residuals.}
  \label{fig:high_overlap_combined}
\end{figure}

\subsection{Robustness to Noise}\label{sec:noise}

To evaluate robustness against experimental noise, we compare eMAGPIE and rPIE at a fixed $75\%$ probe–object overlap with a fixed regularization constant $\alpha=0.05$. Noisy datasets are generated by Poisson sampling, and we test three noise levels: $5\%$, $10\%$, and $20\%$. In addition to the stopping criterion based on the exit-wave misfit, we impose a floor-based rule: we compute a “noise residual” by evaluating the exit-wave misfit at the ground-truth object and probe (using the true scan positions) on the noisy data, and we terminate once the running residual drops below $0.9$ times this floor. Convergence and reconstruction quality for the three noise levels are shown in Fig.~\ref{fig:low_noise_combined}, \ref{fig:mid_noise_combined}, and \ref{fig:high_noise_combined}; across all settings, eMAGPIE attains lower residuals and superior reconstructions compared to rPIE.

\begin{figure}[ht]
  \centering
  \includegraphics[width=.65\textwidth]{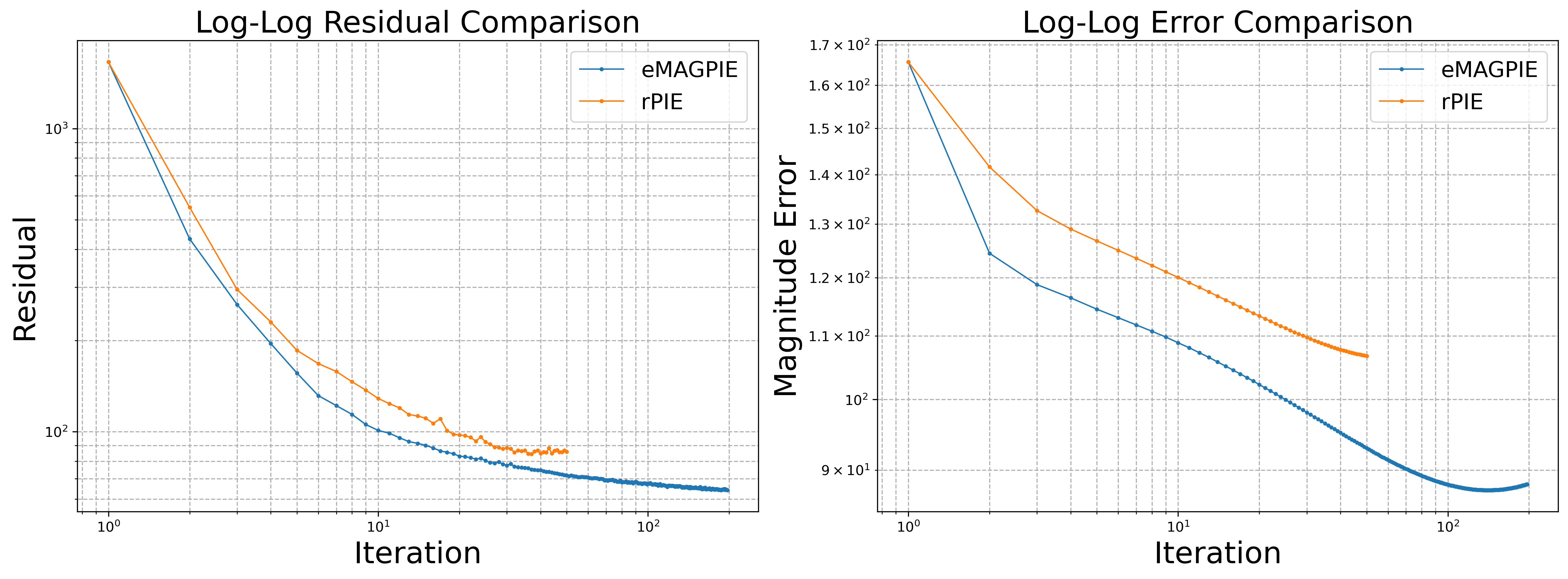}
  \includegraphics[width=.65\textwidth]{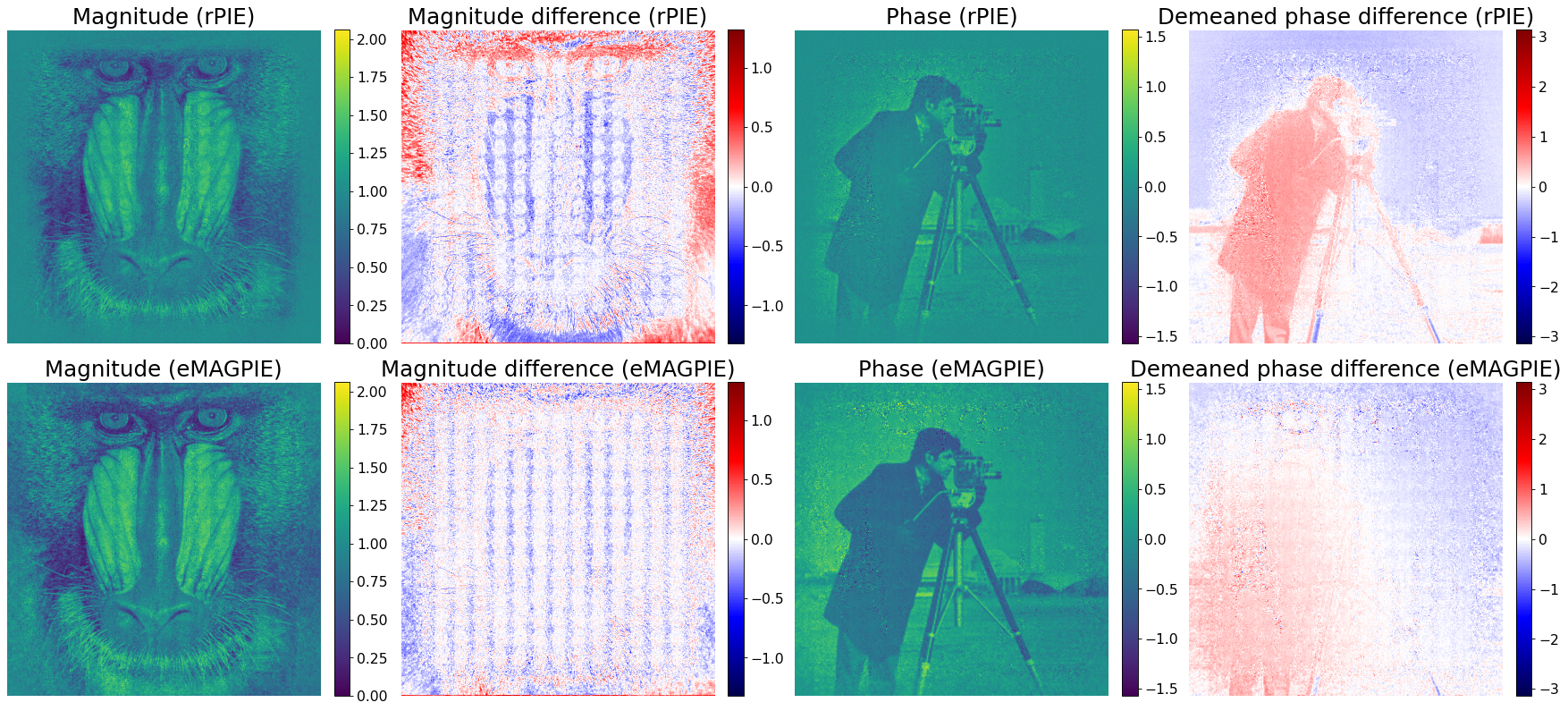}
  \caption{Top: comparison of convergence metrics; Bottom: comparison of final reconstructions. Synthetic data at $75\%$ overlap and $5\%$ noise (amplitude). Both eMAGPIE and rPIE use identical regularization constant $\alpha=0.05$. Early stopping uses (i) a moving average over the last $5$ residuals with patience $5$, and (ii) a floor rule that terminates when the residual falls below $0.9$ times the noise residual (evaluated at the ground-truth object and probe on the noisy data).}
  \label{fig:low_noise_combined}
\end{figure}

\begin{figure}[ht]
  \centering
  \includegraphics[width=.65\textwidth]{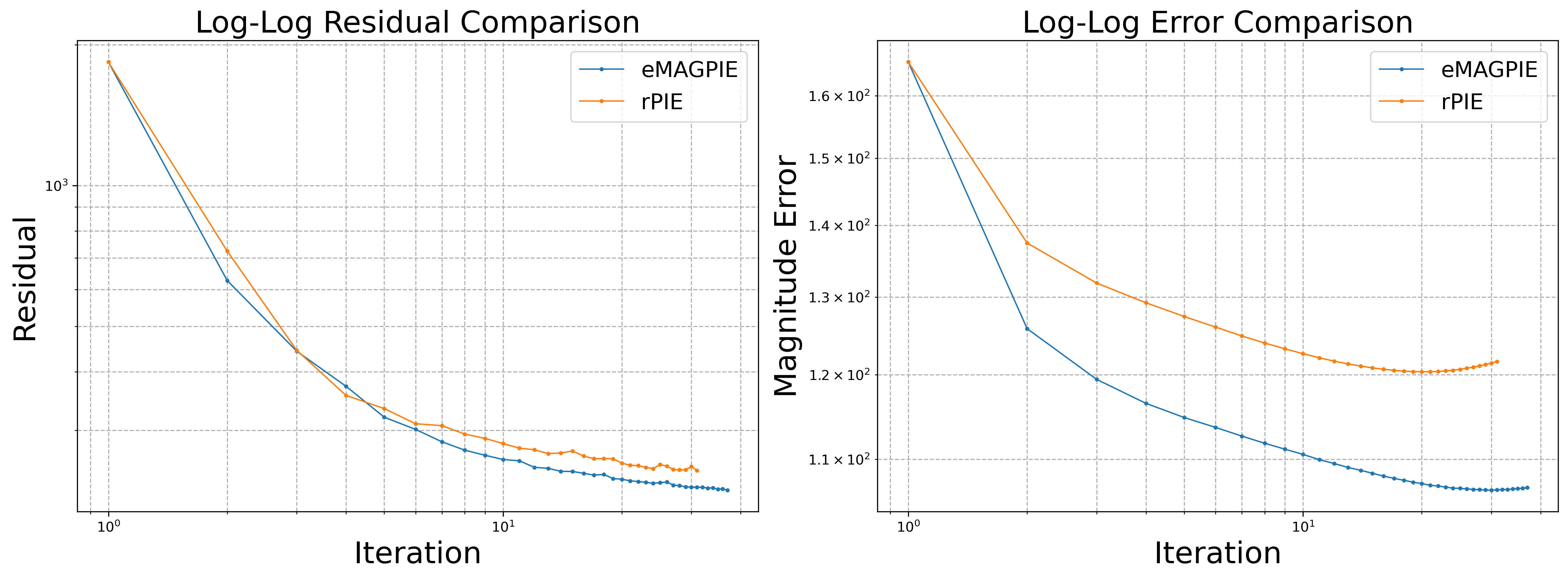}
  \includegraphics[width=.65\textwidth]{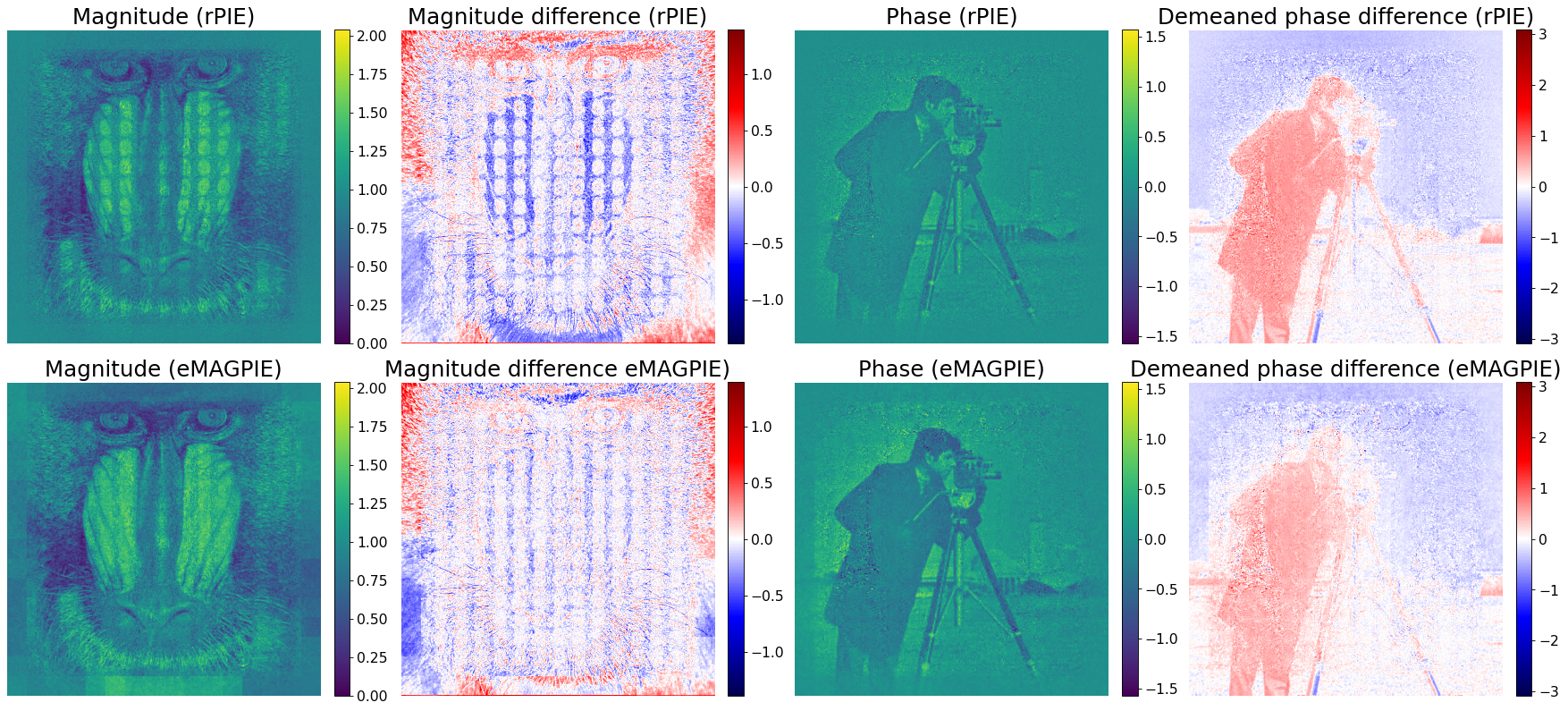}
  \caption{Top: comparison of convergence metrics; Bottom: comparison of final reconstructions. Synthetic data at $75\%$ overlap and $10\%$ noise (amplitude). Both eMAGPIE and rPIE use identical regularization constant $\alpha=0.05$. Early stopping uses (i) a moving average over the last $5$ residuals with patience $5$, and (ii) a floor rule that terminates when the residual falls below $0.9$ times the noise residual (evaluated at the ground-truth object and probe on the noisy data).}
  \label{fig:mid_noise_combined}
\end{figure}

\begin{figure}[ht]
  \centering
  \includegraphics[width=.65\textwidth]{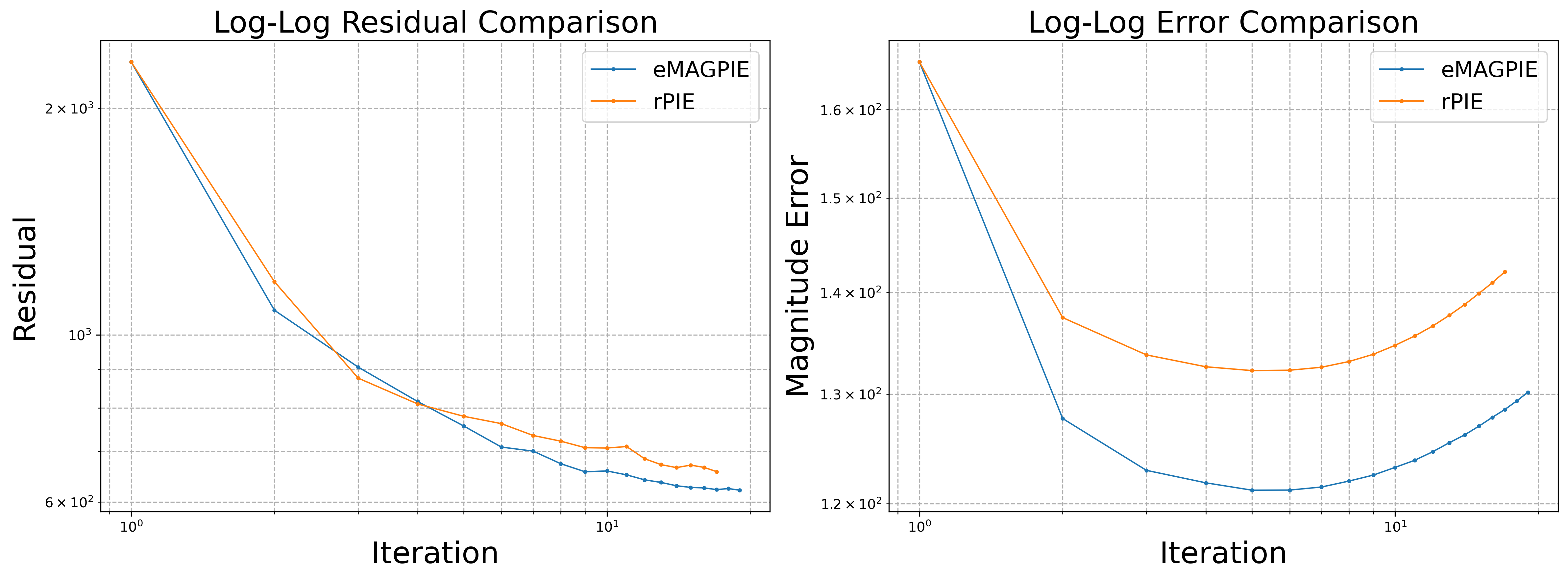}
  \includegraphics[width=.65\textwidth]{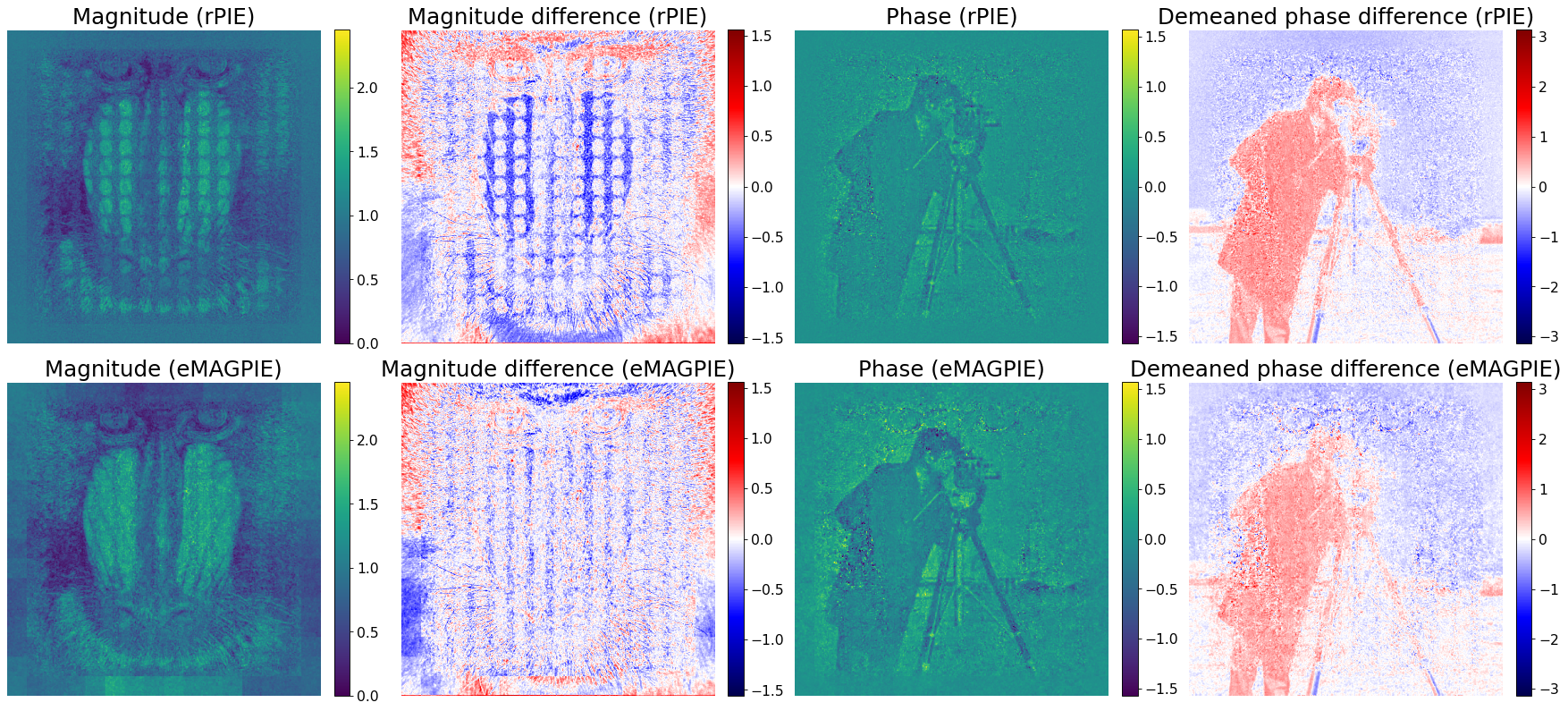}
  \caption{Top: comparison of convergence metrics; Bottom: comparison of final reconstructions. Synthetic data at $75\%$ overlap and $20\%$ noise (amplitude). Both eMAGPIE and rPIE use identical regularization constant $\alpha=0.05$. Early stopping uses (i) a moving average over the last $5$ residuals with patience $5$, and (ii) a floor rule that terminates when the residual falls below $0.9$ times the noise residual (evaluated at the ground-truth object and probe on the noisy data).}
  \label{fig:high_noise_combined}
\end{figure}

\subsection{Real-data Experiment}\label{sec:real_data}

The ptychographic scan was performed on the Velociprobe~\cite{dataset_1} at the 2-ID-D beamline of the Advanced Photon Source, Argonne National Laboratory. A $10$\,keV monochromatic x-ray beam was focused by a Fresnel zone plate (outermost zone width $50$\,nm; diameter $180\,\mu\mathrm{m}$). A gold test sample was placed approximately $200\,\mu\mathrm{m}$ from the focal spot, yielding an illumination of about $330$\,nm full width at half maximum (FWHM). The sample was raster-scanned with a step size of $100$\,nm. Coherent diffraction patterns were recorded on a Dectris EIGER detector (pixel size $75\,\mu\mathrm{m}$) positioned $1.95$\,m downstream of the sample.

From the dataset, we read the real space pixel size  $dx$, wavelength $\lambda$, diffraction patterns $\{\bm d_k\}$ (cropped to $256{\times}256$), and the scanning coordinates $(\mathrm{ppX},\mathrm{ppY})$ in meters. Scan positions were converted to pixel units via $(x_{\text{px}},y_{\text{px}})=(\mathrm{ppX}/dx,\mathrm{ppY}/dx)$ and then sub-sampled on a $121{\times}121$ grid in two ways:
every other position in each axis (resulting in $61{\times}61=3721$ frames) and every fourth ($31{\times}31=961$ frames).

We also compare this with the current practice LSQML method \cite{LSQML} implemented in the PtyChi package \cite{ptychi}. 

\paragraph{Probe initialization.}
We generate $\bm Q^{(0)}$ from a Fresnel zone plate (FZP) model. Using the wavelength $\lambda$, grid size $N{=}256$, sample pixel size $dx$, and the “velo” FZP parameters $(R_n,dR_n)=(90~\mu\mathrm{m},50~\mathrm{nm})$ (diameter $D_{\mathrm{FZP}}=180~\mu\mathrm{m}$, central stop $D_H=60~\mu\mathrm{m}$), we compute the focal length $f_\ell = 2R_n dR_n/\lambda$, form a circularly apertured quadratic phase at the FZP plane, and propagate to the sample by a Fresnel step over $f_\ell + L_s$ with $L_s=-3\times10^{-4}\mathrm{m}$. Finally, we projected $\bm Q^{(0)}$ to a fixed $\ell_2$ norm using
\texttt{dp\_avg} computed from the average diffraction intensity (see Sec.~\ref{sec:setup}), i.e.,
$\bm Q^{(0)} \leftarrow \bm Q^{(0)}\mathrm{dp\_avg}/\|\bm Q^{(0)}\|_2$.

\paragraph{Object initialization.}
The object was initialized to a constant field with a small random perturbation:
$\bm z^{(0)} \equiv 1$ plus i.i.d. complex noise of amplitude $10^{-2}$ on the same grid as the suggested object field of view.

\paragraph{Reconstruction settings.}
We compared rPIE, eMAGPIE, and LSQML under identical data splits and initialization.  
For rPIE and eMAGPIE, we used the regularization constant $\alpha=0.5$ and the ePIE-style probe update.  For LSQML, we used the Gaussian noise model with diagonal preconditioning built from all modes.  For all methods: batch size $1$ (randomized scan order), fixed scan positions (no position refinement).

\paragraph{Phase visualization and post-processed zoom.}
For each reconstructed complex object $\hat{\bm z}$, we display (i) the phase and (ii) a post-processed zoom-in that removes a global phase ramp. On the cropped complex field, we form the wrapped phase image and fit a plane $a x + b y + c$ (least squares) over the physical coordinate grid. Subtracting $a x + b y$ removes the global linear ramp while preserving the local phase structure. The complex field is reassembled as $\left\lvert\cdot\right\rvert e^{\mathrm{i}(\cdot-ax-by)}$, i.e., the magnitude remains unchanged. 

Figures~\ref{fig:real_low} and \ref{fig:real_high} show, for real data, the raw reconstructed phase maps (top rows) alongside the {post-processed} zoom-ins (bottom rows) for eMAGPIE, rPIE, and LSQML at overlap ratios of $87.6\%$ and $93.8\%$, respectively. At the lower overlap (Fig.~\ref{fig:real_low}), eMAGPIE produces visibly smoother and more coherent phase reconstructions, with reduced ripple/striping and cleaner backgrounds; the zoom-ins also reveal sharper features and fewer stitching artifacts compared with rPIE and LSQML, which exhibit residual phase corrugations and speckle-like noise. At the higher overlap (Fig.~\ref{fig:real_high}), all methods improve, and the differences narrow.

\begin{figure}[ht]
  \centering
  \includegraphics[width=0.9\linewidth]{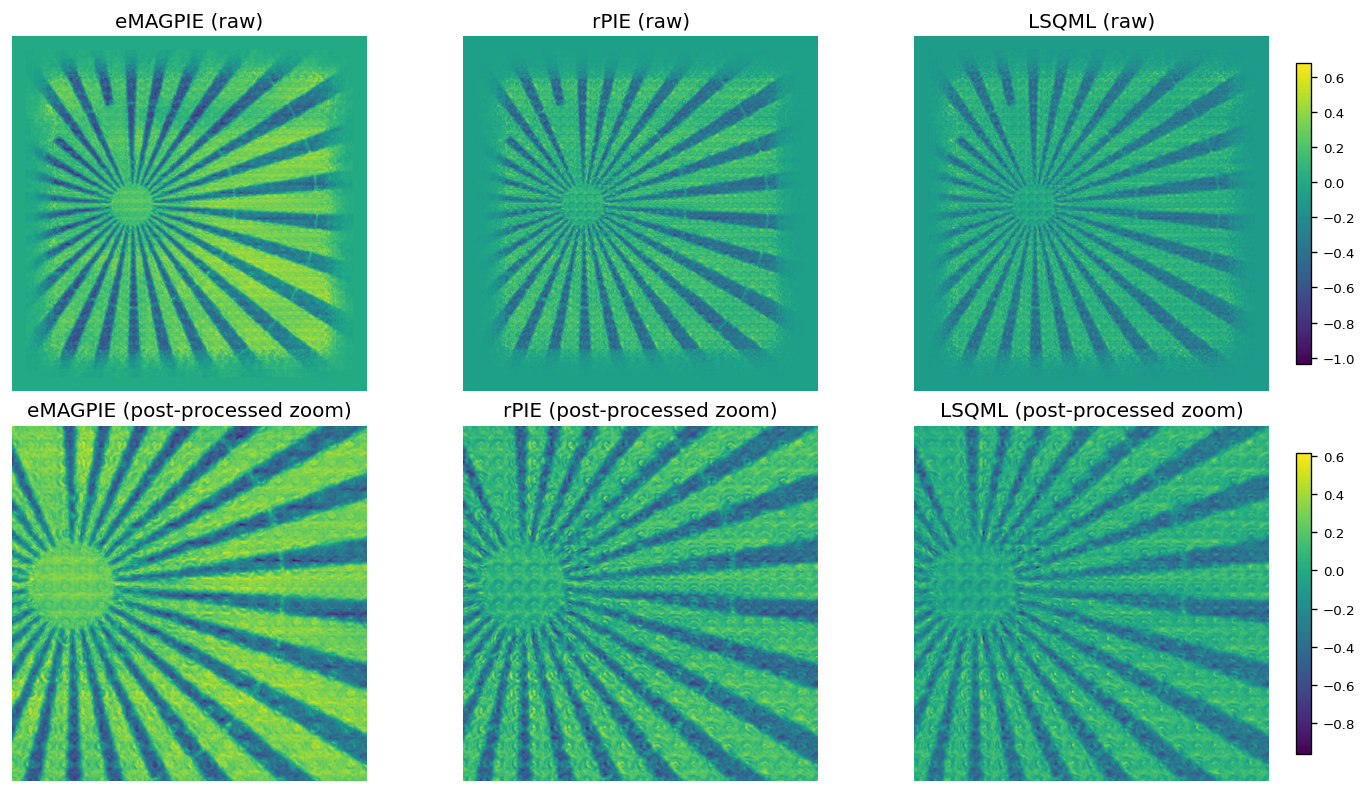}
  \caption{Unprocessed phases (top) and post-processed zoom-ins (bottom) for eMAGPIE, rPIE, and LSQML.}
  \label{fig:real_low}
\end{figure}

\begin{figure}[ht]
  \centering
  \includegraphics[width=0.9\linewidth]{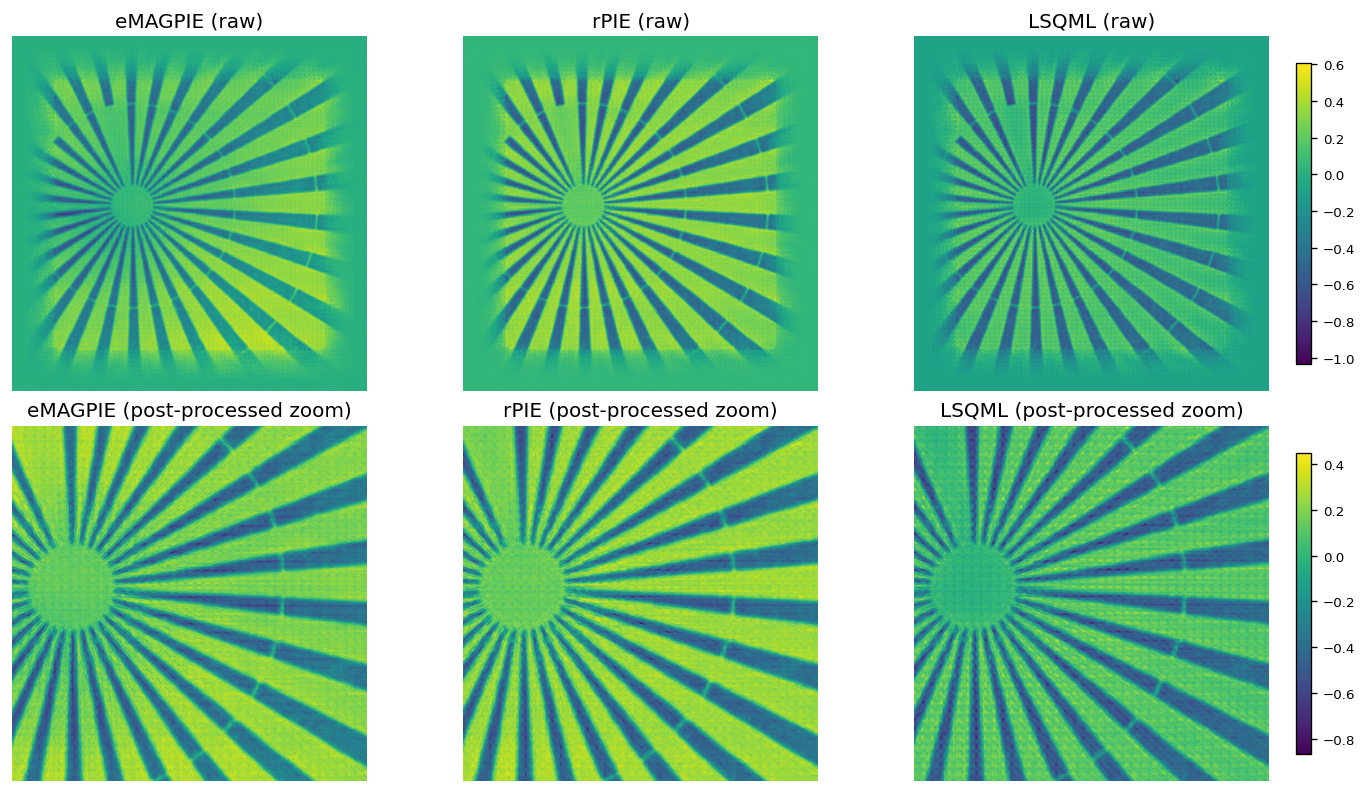}
  \caption{Unprocessed phases (top) and post-processed zoom-ins (bottom) for eMAGPIE, rPIE, and LSQML.}
  \label{fig:real_high}
\end{figure}

\section{Conclusion}
We presented a stochastic multigrid majorization–minimization framework for blind ptychographic phase retrieval. The method accelerates PIE-type schemes via a geometric-mean, phase-aligned joint object–probe update together with rigorously constructed coarse-grid surrogate models. It guaranties a strict decrease of the sampled surrogate at each iteration, embeds a multilevel hierarchy in which coarse-grid regularization is selected automatically by fine–coarse consistency, and preserves the standard rPIE hyperparameter set while markedly improving both convergence speed and reconstruction fidelity.

Looking ahead, we will extend the framework to minibatched updates for GPU acceleration and explore adaptive scheduling across grid levels. The multigrid-surrogate paradigm developed here is general and readily transferable to other large-scale phase-retrieval and coherent diffractive imaging problems.

\section{Acknowledgments}
This material was based upon work supported by the U.S. Department of Energy, Office of Science,
Offices of Advanced Scientific Computing Research and Basic Energy Sciences under contract DE-AC02-06CH11357.

\bibliographystyle{unsrt}
\bibliography{references}  

\begin{thebibliography}{10}

\bibitem{materials_science_2}
R.~Hoppe, J.~Reinhardt, G.~Hofmann, J.~Patommel, J.-D. Grunwaldt, C.~D. Damsgaard, G.~Wellenreuther, G.~Falkenberg, and C.~G. Schroer.
\newblock High-resolution chemical imaging of gold nanoparticles using hard x-ray ptychography.
\newblock {\em Applied Physics Letters}, 102(20):203104, 05 2013.

\bibitem{biology_1}
Joanne Marrison, Lotta R{\"a}ty, Poppy Marriott, and Peter O'Toole.
\newblock Ptychography --a label free, high-contrast imaging technique for live cells using quantitative phase information.
\newblock {\em Scientific Reports}, 3(1):2369, 2013.

\bibitem{biology_2}
Akihiro Suzuki, Kei Shimomura, Makoto Hirose, Nicolas Burdet, and Yukio Takahashi.
\newblock Dark-field x-ray ptychography: Towards high-resolution imaging of thick and unstained biological specimens.
\newblock {\em Scientific Reports}, 6(1):35060, 2016.

\bibitem{integrated_circuit}
Mirko Holler, Manuel Guizar-Sicairos, Esther H.~R. Tsai, Roberto Dinapoli, Elisabeth M{\"u}ller, Oliver Bunk, J{\"o}rg Raabe, and Gabriel Aeppli.
\newblock High-resolution non-destructive three-dimensional imaging of integrated circuits.
\newblock {\em Nature}, 543(7645):402--406, 2017.

\bibitem{integrated_circuit_1}
Yi~Jiang, Junjing Deng, Yudong Yao, Jeffrey~A. Klug, Sheikh Mashrafi, Christian Roehrig, Curt Preissner, Fabricio~S. Marin, Zhonghou Cai, Barry Lai, and Stefan Vogt.
\newblock Achieving high spatial resolution in a large field-of-view using lensless x-ray imaging.
\newblock {\em Applied Physics Letters}, 119(12):124101, 09 2021.

\bibitem{Rodenburg2019}
John Rodenburg and Andrew Maiden.
\newblock {\em Ptychography}, pages 819--904.
\newblock Springer International Publishing, Cham, 2019.

\bibitem{PIE}
J.~M. Rodenburg and H.~M.~L. Faulkner.
\newblock A phase retrieval algorithm for shifting illumination.
\newblock {\em Applied Physics Letters}, 85(20):4795--4797, 11 2004.

\bibitem{ePIE}
Andrew~M. Maiden and John~M. Rodenburg.
\newblock An improved ptychographical phase retrieval algorithm for diffractive imaging.
\newblock {\em Ultramicroscopy}, 109(10):1256--1262, 2009.

\bibitem{rPIE}
Andrew Maiden, Daniel Johnson, and Peng Li.
\newblock Further improvements to the ptychographical iterative engine.
\newblock {\em Optica}, 4(7):736--745, Jul 2017.

\bibitem{multigrid_book}
William~L. Briggs, Van~Emden Henson, and Steve~F. McCormick.
\newblock {\em A Multigrid Tutorial, Second Edition}.
\newblock Society for Industrial and Applied Mathematics, second edition, 2000.

\bibitem{MGOPT}
Stephen~G. Nash.
\newblock Properties of a class of multilevel optimization algorithms for equality-constrained problems.
\newblock {\em Optimization Methods and Software}, 29(1):137--159, 2014.

\bibitem{Nash01012000}
Stephen~G. Nash.
\newblock A multigrid approach to discretized optimization problems.
\newblock {\em Optimization Methods and Software}, 14(1-2):99--116, 2000.

\bibitem{mgopt_1}
Zichao Di, Maria Emelianenko, and Stephen Nash.
\newblock Truncated newton-based multigrid algorithm for centroidal voronoi diagram calculation.
\newblock {\em Numerical Mathematics: Theory, Methods and Applications}, 5(2):242--259, 2012.

\bibitem{mgopt_2}
Robert~Michael Lewis and Stephen~G. Nash.
\newblock Model problems for the multigrid optimization of systems governed by differential equations.
\newblock {\em SIAM Journal on Scientific Computing}, 26(6):1811--1837, 2005.

\bibitem{mginverse1}
Barbara Kaltenbacher.
\newblock On the regularizing properties of a full multigrid method for ill-posed problems.
\newblock {\em Inverse Problems}, 17(4):767, aug 2001.

\bibitem{mginverse2}
Robert~Michael Lewis and Stephen~G. Nash.
\newblock Model problems for the multigrid optimization of systems governed by differential equations.
\newblock {\em SIAM Journal on Scientific Computing}, 26(6):1811--1837, 2005.

\bibitem{mginverse3}
Alfio Borz\`{\i} and Volker Schulz.
\newblock Multigrid methods for pde optimization.
\newblock {\em SIAM Review}, 51(2):361--395, 2009.

\bibitem{MGOPT_ptycho}
Samy Wu~Fung and Zichao~Wendy Di.
\newblock Multigrid optimization for large-scale ptychographic phase retrieval.
\newblock {\em SIAM Journal on Imaging Sciences}, 13(1):214--233, 2020.

\bibitem{magpie}
Borong Zhang, Qin Li, and Zichao~Wendy Di.
\newblock Magpie: Multilevel-adaptive-guided solver for ptychographic phase retrieval, 2025.

\bibitem{lang1985complex}
Serge Lang.
\newblock {\em Complex Analysis}.
\newblock Graduate Texts in Mathematics. Springer New York, NY, 2 edition, June 2013.

\bibitem{CRcalculus}
Ken Kreutz-Delgado.
\newblock The complex gradient operator and the cr-calculus, 2009.

\bibitem{MM_phaseretrieval}
Tianyu Qiu, Prabhu Babu, and Daniel~P. Palomar.
\newblock Prime: Phase retrieval via majorization-minimization technique.
\newblock In {\em 2015 49th Asilomar Conference on Signals, Systems and Computers}, pages 1681--1685, 2015.

\bibitem{SMM_1}
Julien Mairal.
\newblock Stochastic majorization-minimization algorithms for large-scale optimization.
\newblock In C.J. Burges, L.~Bottou, M.~Welling, Z.~Ghahramani, and K.Q. Weinberger, editors, {\em Advances in Neural Information Processing Systems}, volume~26. Curran Associates, Inc., 2013.

\bibitem{SMM_2}
Hanbaek Lyu.
\newblock Stochastic regularized majorization-minimization with weakly convex and multi-convex surrogates.
\newblock {\em Journal of Machine Learning Research}, 25(306):1--83, 2024.

\bibitem{SMM_graph}
Julien Mairal.
\newblock Incremental majorization-minimization optimization with application to large-scale machine learning.
\newblock {\em SIAM Journal on Optimization}, 25(2):829--855, 2015.

\bibitem{Ptychography}
J.M. Rodenburg.
\newblock Ptychography and related diffractive imaging methods.
\newblock volume 150 of {\em Advances in Imaging and Electron Physics}, pages 87--184. Elsevier, 2008.

\bibitem{dataset_1}
Junjing Deng, Curt Preissner, Jeffrey~A. Klug, Sheikh Mashrafi, Christian Roehrig, Yi~Jiang, Yudong Yao, Michael Wojcik, Max~D. Wyman, David Vine, Ke~Yue, Si~Chen, Tim Mooney, Maoyu Wang, Zhenxing Feng, Dafei Jin, Zhonghou Cai, Barry Lai, and Stefan Vogt.
\newblock The velociprobe: An ultrafast hard x-ray nanoprobe for high-resolution ptychographic imaging.
\newblock {\em Review of Scientific Instruments}, 90(8):083701, 08 2019.

\bibitem{LSQML}
Michal Odstr\v{c}il, Andreas Menzel, and Manuel Guizar-Sicairos.
\newblock Iterative least-squares solver for generalized maximum-likelihood ptychography.
\newblock {\em Opt. Express}, 26(3):3108--3123, Feb 2018.

\bibitem{ptychi}
Ming Du, Hanna Ruth, Steven Henke, Yi~Jiang, Viktor Nikitin, Ashish Tripathi, Junjing Deng, Jeffrey Klug, Peco Myint, Tao Zhou, Nicholas Schwarz, Mathew Cherukara, Alec Sandy, and Stefan Vogt.
\newblock Pty-chi: A pytorch-based modern ptychographic data analysis package, 2025.

\end{thebibliography}
\end{document}